\documentclass[11pt,reqno]{amsart}
\usepackage{graphicx,amscd}
\usepackage{amsfonts,amsmath,amssymb}
\oddsidemargin=0.1in \evensidemargin=0.1in \textwidth=6.4in
\headheight=.2in \headsep=0.1in \textheight=8.4in

\newcommand{\e}{\varepsilon}
\newcommand{\supp}{{\mathrm{supp}}}

\newcommand{\dist}{{\mathrm{dist}}}

\newcommand{\C}{\mathbb C}

\renewcommand {\supp}{{\rm supp\,}}
\newtheorem{theorem}{Theorem}[section]
\newtheorem{lemma}[theorem]{Lemma}
\newtheorem{prop}[theorem]{Proposition}
\newtheorem{cor}[theorem]{Corollary}

\theoremstyle{definition}
\newtheorem{defn}[theorem]{Definition}

\newtheorem{example}[theorem]{Example}

\title{On Rational Convexity of Totally Real Sets}
\author{Blake J. Boudreaux}
\address[Blake J. Boudreaux]{The University of Western Ontario}
\email{bboudre7@uwo.ca}

\author{Rasul Shafikov}
\address[Rasul Shafikov]{The University of Western Ontario}
\email{shafikov@uwo.ca}

\begin{document}
\begin{abstract}
	Under a mild technical assumption, we prove a necessary and sufficient condition for a totally real compact set in $\mathbb C^n$ to be rationally convex. This generalizes a classical result of Duval--Sibony.
\end{abstract}

\maketitle

\section{Introduction}
It is an important and generally difficult problem in complex analysis to characterize convexity (polynomial, rational, etc.) of compact sets in complex Euclidean spaces. Quite often such a characterization involves ideas from an area of mathematics not directly related to the definition of convexity. In this paper we are concerned with rational convexity of compact sets in $\mathbb C^n$, see Section~\ref{s.rc} for basic definitions. Our principal result is the following.

\begin{theorem}\label{t.BS}
	Let $S$ be a compact regular totally real set in $\mathbb{C}^n$, $n>1$. Then the following are equivalent:
\begin{enumerate}
	\item[(i)] $S$ is rationally convex.
	\item[(ii)] There exists a smooth strictly plurisubharmonic function $\varphi:\mathbb{C}^n\to\mathbb{R}$ and a finite regular cover $\{\Sigma_{k_j}\}_{j=1,\ldots ,r}$ of $S$ by totally real manifolds  that are isotropic (Lagrangian) with respect to the K\"ahler form $\omega = dd^c\varphi$,  i.e., that satisfy
$\iota_{\Sigma_{k_j}}^* \omega=0$ for each $j=1,\ldots,r$, where $\iota_{\Sigma_{k_j}}: \Sigma_{k_j}\to\mathbb{C}^n$ denotes the inclusion map.
\end{enumerate}
\end{theorem}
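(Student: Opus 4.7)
My plan is to prove both implications by reducing to the classical Duval--Sibony theorem stratum by stratum and then globalizing, using the regular cover structure as the glue.

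For the direction (ii)$\Rightarrow$(i), I would proceed via a H\"ormander $L^2$ construction. Fix $p\in\mathbb{C}^n\setminus S$; it suffices to produce a polynomial $P$ with $P(p)=0$ and $|P|\geq c>0$ on $S$. The key local input is that on a Weinstein-type tubular neighborhood $U_j$ of each isotropic stratum $\Sigma_{k_j}$ one can write $\omega=d\theta_j$ with $\theta_j|_{\Sigma_{k_j}}=0$, producing local holomorphic ``phase functions.'' Using a plurisubharmonic weight of the form $N\varphi+\log\|z-p\|^{2n}$ and solving a $\dbar$-equation for a patched model built from these local primitives via a partition of unity subordinate to $\{U_j\}$, one obtains for $N$ sufficiently large a global holomorphic $f$ on $\mathbb{C}^n$ with the required vanishing at $p$ and a positive lower bound on $S$. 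Oka--Weil approximation then yields the desired algebraic hypersurface through $p$ disjoint from $S$.

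For the direction (i)$\Rightarrow$(ii), the harder one, I would first apply the classical Duval--Sibony theorem to each stratum individually: rational convexity of $S$ forces rational convexity of each totally real manifold $\Sigma_{k_j}$ (since $\Sigma_{k_j}\subset S$ and the hull is monotone), so there exists a strictly plurisubharmonic $\varphi_j:\mathbb{C}^n\to\mathbb{R}$ with $\iota_{\Sigma_{k_j}}^*dd^c\varphi_j=0$. The task is then to merge the $\varphi_j$'s into a single $\varphi$ that is strictly plurisubharmonic globally and simultaneously isotropic on every stratum. A naive Richberg-type maximum or convex combination preserves strict plurisubharmonicity but spoils the individual isotropy conditions on intersections. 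My approach would be to order the strata by increasing dimension and perform an inductive correction: start with $\varphi$ adapted to the lowest-dimensional strata, and at each step add a small smooth perturbation $\psi_j$, supported in a narrow tubular neighborhood of the next stratum, designed to kill the restriction $\iota_{\Sigma_{k_j}}^*(dd^c\varphi)$ without destroying isotropy on previously handled strata.

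The principal obstacle is precisely this gluing, where one must balance strict plurisubharmonicity against simultaneous isotropy on a geometrically intricate but finite collection of totally real manifolds that may meet in delicate ways. The hypothesis that the cover $\{\Sigma_{k_j}\}$ is regular should carry the technical weight here, controlling the behavior at stratum overlaps so that the inductive step is quantitative; I would expect the bulk of the work to consist in $C^2$-estimates on the corrections $\psi_j$ ensuring that the running sum remains strictly plurisubharmonic throughout, closing with a Richberg-type regularization to produce the final smooth $\varphi$.
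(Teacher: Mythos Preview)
Your (i)$\Rightarrow$(ii) strategy rests on a false inclusion: the strata $\Sigma_{k_j}$ are not subsets of $S$; rather, they are totally real manifolds that \emph{cover} $S$, i.e., $S\subset\bigcup_j\Sigma_{k_j}$. (They are also not compact in general, so the classical Duval--Sibony theorem does not even apply to them directly.) Monotonicity of the rationally convex hull therefore tells you nothing about $R(\Sigma_{k_j})$, and the stratum-by-stratum application of Duval--Sibony never gets off the ground; the entire inductive merging scheme that follows is moot. The paper's argument for this direction is both different and simpler: since $\overline\Sigma=\bigcup_j\overline{\Sigma_{k_j}}$ is itself a totally real set (Proposition~\ref{p.BS}(iii)), there is a nonnegative strictly plurisubharmonic $\varphi_1$ on a neighbourhood with $\varphi_1^{-1}(0)=\overline\Sigma$, and the isotropy $\iota^*_{\Sigma_{k_j}}dd^c\varphi_1=0$ is automatic because $\varphi_1$, and hence its gradient, vanishes along every stratum. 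Rational convexity enters only in extending $\varphi_1$ to a globally strictly plurisubharmonic function on $\mathbb{C}^n$, via the Duval--Sibony description of $R(S)$ by positive closed $(1,1)$-forms vanishing near $S$.

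For (ii)$\Rightarrow$(i) your sketch is closer in spirit to the paper (both ultimately solve a $\bar\partial$-equation against an ``almost holomorphic'' model), but it skips a genuine obstruction. Since $\omega=dd^c\varphi$, a global primitive $\theta=d^c\varphi$ already exists; the isotropy says only that $\iota^*_{\Sigma_{k_j}}d^c\varphi$ is \emph{closed} on each stratum, not exact. To obtain a single-valued phase $g$ with $dg=\iota^*d^c\varphi$ on $\overline\Sigma$ (and hence a well-defined model $h=e^{\varphi+ig}$ with $|h|=e^{\varphi}$ on $\overline\Sigma$), one must arrange the periods $\int_\gamma d^c\varphi$ over loops in $\overline\Sigma$ to lie in $2\pi\mathbb{Z}$. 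The paper does this explicitly (Lemma~\ref{l.BS}): it builds compactly supported corrections $\psi_\ell$ dual to a basis of $H_1(\overline\Sigma,\mathbb{Z})$, adjusts $\varphi$ to $\varphi_\lambda=\varphi+\sum\lambda_\ell\psi_\ell$, and rescales so the periods become integral. This is precisely where the regular-cover hypothesis and the ANR/finitely-generated-$H_1$ properties of $\overline\Sigma$ (Proposition~\ref{p.BS}(v)) are used. Without resolving these periods your ``patched model built from local primitives'' cannot be made globally consistent, and the $\bar\partial$-scheme does not close.
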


This theorem is a generalization of a well-known result of Duval--Sibony~\cite{DS} in which $S$ was assumed to be a compact smooth totally real manifold. A compact $S\subset \mathbb C^n$ is said to be a {\it
totally real set} if it is the zero locus of a nonnegative strictly plurisubharmonic function defined in a neighbourhood of $S$. In particular, every totally real manifold is a totally real set. The study of totally real sets was pioneered by Wells~\cite{We} and Harvey--Wells~\cite{HW0,HW1}. A priori, totally real sets may have no  regularity, but it known that locally they are contained in totally real manifolds. This gives a cover of  a totally real set $S$ by totally real manifolds, in general of different dimension. We call the cover regular if any intersection of the manifolds in the cover is also a manifold. This property allows us to construct a special cover of $S$ by totally real manifolds $\{\Sigma_{k_j}\}_{j=1,\ldots ,r}$. We call a totally real set {\it regular} if it admits a regular cover. A prominent feature of this cover is that its closure is itself a totally real set and a stratified space satisfying Whitney condition (B), this is the content of Section~\ref{s.trs} below.

The proof of the theorem, which given in Section 4, in fact, gives a slightly more refined statement, which is also new in the case of totally real manifolds.

\begin{cor}\label{c.global}
Let $M\subset\mathbb{C}^n$, $n>1$, be a regular totally real set. A compact $S\subseteq M$ is rationally convex if and only if there exists a smooth strictly plurisubharmonic function $\varphi:\mathbb{C}^n\to\mathbb{R}$, a neighbourhood $U$ of $S$, and a regular cover $\{\Sigma_{k_j}\}_{j=1,\ldots,r}$ of $M$ such that $\iota^*_{U\cap\Sigma_{k_j}}dd^c\varphi=0$ for each $j=1,\ldots,r$. In particular,  if $M$ is a totally real manifold, then $M$ can be taken to be the cover, i.e., $S$ is rationally convex iff  $\iota^*_{U\cap M} dd^c\varphi = 0$.
\end{cor}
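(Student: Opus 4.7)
The corollary refines Theorem~\ref{t.BS} in two ways: it allows $S$ to be a compact subset of a larger regular totally real set $M$, and it asks for the isotropy condition on a regular cover of $M$ restricted to a neighborhood $U$ of $S$, rather than on some cover of $S$ itself. I would establish the two implications separately, with nearly all of the substance in the forward direction.

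For the backward implication, suppose $\varphi$, $U$, and a regular cover $\{\Sigma_{k_j}\}$ of $M$ satisfy $\iota^*_{U \cap \Sigma_{k_j}} dd^c\varphi = 0$. Set $\Sigma'_{k_j} := U \cap \Sigma_{k_j}$. Each $\Sigma'_{k_j}$ is an open, hence totally real, submanifold of $\Sigma_{k_j}$; their intersections remain manifolds by the regularity of the ambient cover; and together they cover $S \subseteq U \cap M$. Thus $\{\Sigma'_{k_j}\}$ is a finite regular cover of $S$ by totally real manifolds isotropic with respect to $\omega = dd^c\varphi$, and condition (ii) of Theorem~\ref{t.BS} holds, giving rational convexity of $S$.

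For the forward implication, the plan is to invoke the proof of Theorem~\ref{t.BS} with a cover of $S$ inherited from $M$. Since $M$ is, by hypothesis, a regular totally real set, fix a regular cover $\{\Sigma_{k_j}\}$ of $M$ and shrink to a neighborhood $U$ of $S$ so that $\{U \cap \Sigma_{k_j}\}$ is a regular cover of $S$. I would then argue that the construction of $\varphi$ underlying Theorem~\ref{t.BS} is flexible enough to accept this cover as input --- it builds $\varphi$ locally from the strata one is handed --- and so produces $\varphi$ with $dd^c\varphi$ vanishing on each $U \cap \Sigma_{k_j}$. The specialization to a totally real manifold is immediate, since $\{M\}$ is itself a one-element regular cover of $M$, whence the condition collapses to $\iota^*_{U \cap M} dd^c\varphi = 0$.

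The main obstacle is verifying this flexibility: that the proof of Theorem~\ref{t.BS} accommodates any prescribed regular cover of $S$, rather than only some canonical cover generated in the course of the argument. If the construction is instead tied to a specific cover, the fallback plan is to extend the strata produced by Theorem~\ref{t.BS} to open submanifolds of a regular cover of $M$ --- exploiting the local extension property of totally real manifolds noted in the introduction --- and thereby assemble an $M$-cover which agrees with the output of the proof on a neighborhood of $S$, where the isotropy condition already holds.
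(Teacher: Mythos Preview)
Your proposal is correct and follows the same overall strategy as the paper, though the two implications differ in their details.

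For the backward implication you apply Theorem~\ref{t.BS} directly to $S$, observing that $\{U\cap\Sigma_{k_j}\}$ is already a regular isotropic cover of the compact regular totally real set $S$. The paper takes a longer route: it applies Theorem~\ref{t.BS} to the closure of $\hat S=U\cap(\cup_j\Sigma_{k_j})$ (after shrinking $U$) to conclude that this larger compact is rationally convex, and then deduces rational convexity of $S\subset\hat S$ via the approximation machinery (Corollary~\ref{c.approx} together with Stout's theorem that uniform rational approximability implies rational convexity). Your argument is more direct; the paper's detour through approximation is not essential here.

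For the forward implication, both you and the paper rerun the $(\mathrm{i})\Rightarrow(\mathrm{ii})$ construction from Section~\ref{p.easy}. Your worry about ``flexibility'' is precisely what the paper handles concretely: one applies Proposition~\ref{p.BS} to a given regular cover to produce a refined cover $\{\Sigma_{k_j}\}$ of $S$ whose union has totally real closure $\tilde S$, takes a nonnegative strictly plurisubharmonic $\varphi_1$ vanishing exactly on $\tilde S$, and then extends $\varphi_1$ to all of $\mathbb C^n$ by the patching argument of Section~\ref{p.easy}. Since $\varphi_1$ (and its gradient) vanishes on each $\Sigma_{k_j}$, the resulting $\varphi$ satisfies $\iota^*_{\Sigma_{k_j}}dd^c\varphi=0$. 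So the construction does accept a prescribed regular cover as input, but the output cover is the one coming from Proposition~\ref{p.BS}, not the input cover itself. Your fallback plan of extending strata outward is not pursued in the paper and is not needed.
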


The proof of Corollary \ref{c.global} will be given in Section 4.

\smallskip

Some generalizations of the Duval--Sibony theorem were also obtained by Gayet~\cite{Ga}, Duval--Gayet~\cite{DG1}, Shafikov--Sukhov~\cite{SS1}, and Mitrea~\cite{Mi}. In these results $S$ is either an immersed manifold or has special isolated singularities. In the case $\dim S = n$, such an $S$ cannot be a totally real set, and so these results apply to a different class of compacts. On the other hand, the set $S$ in our result need not have any regularity at all, in particular, it may have a fractal-type behaviour.

As an application, combining Theorem~\ref{t.BS} with the work of Berndtsson~\cite{Be}
we obtain the following approximation result.

\begin{cor}\label{c.approx}
Suppose that $S \subset \mathbb C^n$, $n>1$, is a compact totally real set with a regular cover $\{\Sigma_{k_j}\}_{j=1,\ldots ,r}$ that is isotropic (Lagrangian) with respect to some K\"ahler form on $\mathbb C^n$. Then any complex-valued continuous function on $S$ can be approximated uniformly on $S$ by rational functions with the poles off $S$.
\end{cor}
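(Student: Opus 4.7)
The plan is to derive Corollary~\ref{c.approx} by composing Theorem~\ref{t.BS} with Berndtsson's approximation theorem from \cite{Be}. The strategy has two steps: first, use the isotropic-cover hypothesis to conclude that $S$ is rationally convex; then, use Berndtsson's theorem to pass from rational convexity to the desired uniform approximation.

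For the first step, I would observe that the hypothesis of the corollary almost matches condition (ii) of Theorem~\ref{t.BS}: both require a regular cover by totally real manifolds that are isotropic with respect to a K\"ahler form. The one gap is that Theorem~\ref{t.BS}(ii) insists the K\"ahler form be of the particular form $\omega = dd^c\varphi$ for a globally defined smooth strictly plurisubharmonic $\varphi:\mathbb{C}^n\to\mathbb{R}$, whereas Corollary~\ref{c.approx} only posits \emph{some} K\"ahler form. I would bridge this via the $dd^c$-Poincar\'e lemma on the Stein manifold $\mathbb{C}^n$: since $H^2(\mathbb{C}^n;\mathbb{R})=0$, every smooth K\"ahler form on $\mathbb{C}^n$ is globally $dd^c$-exact, and strict positivity of $\omega$ transfers to strict plurisubharmonicity of the potential. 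The intrinsic pullback condition $\iota^*_{\Sigma_{k_j}}\omega=0$ is manifestly unaffected by this reformulation. Thus Theorem~\ref{t.BS}(ii) holds, and the implication (ii)$\Rightarrow$(i) gives rational convexity of $S$. (Alternatively, one could work only in a neighbourhood of $S$ and appeal to Corollary~\ref{c.global} directly, which makes the global potential question moot.)

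The second step is a direct appeal to \cite{Be}: on a rationally convex compact totally real subset of $\mathbb{C}^n$, every continuous complex-valued function is a uniform limit of rational functions with poles off the set. Applied to our $S$, this is exactly the statement of the corollary. The only point requiring genuine care is that Berndtsson's theorem must be invoked for a totally real set in the generalized sense of this paper, which need have no manifold regularity whatsoever. The regular Whitney~(B) stratification constructed in Section~\ref{s.trs}, together with the fact that $S$ is by definition the zero set of a nonnegative strictly plurisubharmonic function, should supply the structural hypotheses needed in \cite{Be}; this is the step at which I would read Berndtsson's paper most attentively, and it is the main (and indeed only) real obstacle in the argument.
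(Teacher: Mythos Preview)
Your approach is the same as the paper's: apply Theorem~\ref{t.BS} to obtain rational convexity of $S$, then feed this into an approximation theorem. Two small corrections to your second step. First, the paper factors it through the Oka--Weil theorem: Berndtsson~\cite{Be} gives uniform approximation of continuous functions on a totally real set by functions \emph{holomorphic} in a neighbourhood of $S$, and then Oka--Weil converts holomorphic approximation into rational approximation on a rationally convex compact; you have collapsed these into a single citation of~\cite{Be}. Second, Berndtsson's hypothesis is exactly that $S$ be the zero locus of a nonnegative strictly plurisubharmonic function, so the Whitney~(B) stratification of Section~\ref{s.trs} plays no role here and your worry on that point is unnecessary. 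Your explicit use of the global $dd^c$-lemma to reconcile ``some K\"ahler form on $\mathbb C^n$'' with ``$\omega=dd^c\varphi$'' is correct and in fact more careful than the paper, which silently identifies the two.
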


The proof of Corollary~\ref{c.approx} is immediate from Theorem~\ref{t.BS}; see Section 2.

\smallskip

The condition that $S$ admits a regular cover is a technical assumption needed for our arguments to work. We do not know if this assumption is really necessary or if there exist totally real sets that do not admit a regular cover.

\medskip

\noindent{\bf Acknowledgments.} We would like to thank Matthias Franz for helpful discussions concerning ANR spaces. We would also like to thank the anonymous referee for valuable comments, in particular for suggesting the statement of Proposition~\ref{p.refe}. The second author is partially supported by Natural Sciences and Engineering Research Council of Canada.

\section{Rational Convexity}\label{s.rc}
Given a  compact set $S \subset \mathbb C^n$, the {\it rationally convex hull} of $S$, denoted by $R(S)$, is defined as
\[
R(S) = \{ z \in \mathbb C^n : |P(z)| \le || P ||_{S},  P \text{ is any rational function with poles off } S \}.
\]
We say that $S$ is rationally convex if $R(S) = S$. This is equivalent to the following: for any point $z_0\in \mathbb C^n \setminus S$ there exists a holomorphic polynomial $P(z)$ on $\mathbb C^n$ such that $P(z_0)=0$ but $P$ does not vanish on $S$. Any compact in $\mathbb C$ is rationally convex, but in higher dimensions it is generally difficult to determine whether a given compact is rationally convex or not, see, e.g., Stolzenberg~\cite{Stol} for an early work in this direction.

A general sufficient condition for rational convexity is given by Duval--Sibony~\cite[Theorem 1.1]{DS}: {\it if $\varphi$ is a plurisubharmonic function on $\mathbb C^n$, $\mathbb C^n \setminus \supp dd^c \varphi$ is compact, then for any $s>0$, the set
$$
K_s = \{z\in \mathbb C^n : \dist (z, \supp dd^c \varphi )\ge s\}
$$
is rationally convex.} This theorem implies the following characterization of rational convexity of totally real sets.

\begin{prop}\label{p.refe}
	Let $S\subset\mathbb{C}^n$ be compact totally real set given as the zero locus of a nonnegative strictly plurisubharmonic function $\varphi$ defined in a neighbourhood of $S$. Then
$S$ is rationally convex if and only if $dd^c\varphi$ extends to a K\"ahler form on $\mathbb C^n$.
\end{prop}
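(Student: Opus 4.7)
The plan is to prove the two directions separately, with Theorem~1.1 of Duval--Sibony (quoted above) supplying the sufficient direction and a converse-style construction supplying the necessary direction. In both cases the argument reduces to combining $\varphi$ with a suitable globally defined psh function on $\mathbb{C}^n$ via a regularised maximum.

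For $(\Leftarrow)$ I would start from the assumed K\"ahler extension, writing it as $\omega=dd^c\Phi$ with $\Phi$ smooth strictly psh on $\mathbb{C}^n$ (using that $\mathbb{C}^n$ is Stein), so that $\Phi-\varphi$ is pluriharmonic on some neighbourhood $W\subseteq U$ of $S$. After possibly shrinking $U$ so that $\varphi\geq\epsilon_0>0$ on $\partial U$, for any $\epsilon<\epsilon_0$ the sublevel set $V_\epsilon=\{\varphi<\epsilon\}$ is relatively compact in $W$, and I would set
$$
\psi=\begin{cases}\max(\Phi-\varphi,\ \Phi-\epsilon)&\text{on }U,\\ \Phi-\epsilon&\text{on }\mathbb{C}^n\setminus U.\end{cases}
$$
On $\{\varphi\geq\epsilon\}\cap U$ the max equals $\Phi-\epsilon$, so the formulas glue and $\psi$ is globally psh on $\mathbb{C}^n$. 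Near $S$ we have $\psi=\Phi-\varphi$ (pluriharmonic), and off $V_\epsilon$ we have $\psi=\Phi-\epsilon$ (with $dd^c\psi=\omega>0$), so $\mathbb{C}^n\setminus\supp dd^c\psi$ is relatively compact and contains $S$. Given any $z_0\in\mathbb{C}^n\setminus S$, I shrink $\epsilon$ further so that $z_0\notin\overline{V_\epsilon}$, forcing $z_0\in\supp dd^c\psi$ while $\dist(S,\supp dd^c\psi)>0$. Applying Theorem~1.1 of Duval--Sibony to $\psi$, with $s$ strictly between $0$ and $\dist(S,\supp dd^c\psi)$, yields a rationally convex compact $K_s$ containing $S$ but not $z_0$; any polynomial separating $z_0$ from $K_s$ separates it from $S$, and $S$ is rationally convex.

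For $(\Rightarrow)$ the idea is to reverse this construction. First I would use the rational convexity of $S$ to manufacture a smooth psh function $u$ on $\mathbb{C}^n$ that is pluriharmonic on a neighbourhood $W$ of $S$ and strictly psh outside some compact set: taking polynomials $P_i$ with $P_i\neq 0$ on $S$ whose zero sets form a dense family in $\mathbb{C}^n\setminus S$, one can build a convergent weighted combination of $\log^+|P_i/c_i|$ (each summand vanishing on $S$) plus a strictly psh contribution localised at infinity, and then Richberg-smooth it. With $u$ in hand I would choose a smooth strictly psh $\rho$ on $\mathbb{C}^n$ (for instance a suitably scaled and shifted $|z|^2$) with $\rho<0$ on $S$ and $\rho>\varphi+1$ on $(\mathbb{C}^n\setminus V)\cap U$, where $V\Subset U\cap W$ is a relatively compact neighbourhood of $S$, and glue via
$$
\tilde\Phi=\begin{cases} M_\eta(u+\varphi,\ u+\rho) & \text{on }U, \\ u+\rho & \text{on }\mathbb{C}^n\setminus U, \end{cases}
$$
where $M_\eta$ denotes a smooth regularised maximum equal to whichever argument dominates by more than $\eta$. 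On a neighbourhood of $S$ one has $\rho<0\leq\varphi$, so $\tilde\Phi=u+\varphi$ and $dd^c\tilde\Phi=dd^c\varphi$ (since $u$ is pluriharmonic there); off $V$ one has $\rho>\varphi$, so $\tilde\Phi=u+\rho$, which is smooth strictly psh; and in the transition both $u+\varphi$ and $u+\rho$ are strictly psh, so $M_\eta$ preserves smooth strict psh-ness. Then $\omega:=dd^c\tilde\Phi$ is the desired K\"ahler extension of $dd^c\varphi$.

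The main obstacle is the forward direction, and more precisely the construction of the auxiliary psh $u$: this is the converse of the sufficient condition quoted in the excerpt and is the step where the rational convexity of $S$ is actually used. Once $u$ is available the regularised-max gluing to $\varphi$ is routine, and the backward direction is then a direct application of the cited Duval--Sibony theorem.
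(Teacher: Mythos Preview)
Your $(\Leftarrow)$ direction is correct and is essentially what the paper does: it cites Nemirovski's proposition, which is precisely a packaging of the Duval--Sibony Theorem~1.1 argument you wrote out.

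The $(\Rightarrow)$ direction has a real gap, and it lies not so much in the construction of $u$ as in the gluing step. You ask for a smooth strictly plurisubharmonic $\rho$ on all of $\mathbb{C}^n$ with $\rho<0$ on $S$ and $\rho>\varphi+1$ on $(\mathbb{C}^n\setminus V)\cap U$, and suggest $\rho=c|z|^2-d$. These constraints are incompatible for such a $\rho$. Take for instance $S$ to be the unit circle in $\mathbb{R}^2\subset\mathbb{C}^2$: whatever neighbourhood $U$ you pick, the set $(\mathbb{C}^n\setminus V)\cap U$ contains either points with $|z|$ arbitrarily close to $1$ (if $U$ is a thin tube about $S$) or points with $|z|$ small, e.g.\ the origin (if $U$ is a ball); in either case $c|z|^2-d$ cannot be simultaneously negative on $S$ and larger than $\varphi+1$ at those points. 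What you actually need is a global strictly psh function whose zero sublevel set is squeezed between $S$ and $V$, and producing such a function is essentially as hard as the proposition itself. The deeper reason the gluing is forced into this corner is that your $u=\sum a_i\log^+|P_i/c_i|+(\text{term at infinity})$ is pluriharmonic off a thin set (a union of the real hypersurfaces $\{|P_i|=c_i\}$ together with a neighbourhood of infinity), so it contributes no strict plurisubharmonicity on the bulk of the region between $S$ and infinity; all of that has to come from $\rho$, which a quadratic cannot supply.

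The paper avoids this entirely by invoking Duval--Sibony's Theorem~2.1 rather than building $u$ by hand: for each $z\notin S$ one obtains a smooth closed positive $(1,1)$-form $\omega_z$ that is strictly positive at $z$ and identically zero on a neighbourhood of $S$. Summing finitely many such forms over a compact shell and adding $\varepsilon\,dd^c(\chi_1|z|^2)$ at infinity yields a global form $\tilde\omega=dd^c\varphi_2$ that vanishes on a fixed neighbourhood $\overline V$ of $S$ and is K\"ahler everywhere else. Then one simply takes $\chi_2\varphi+C\varphi_2$, where $\chi_2\in C^\infty_0(U)$ equals $1$ near $\overline V$ and $C\gg0$; this is globally strictly plurisubharmonic and its $dd^c$ equals $dd^c\varphi$ near $S$. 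No regularised max and no auxiliary $\rho$ are needed, because $\varphi_2$ already carries strict plurisubharmonicity right up to $\overline V$. Your sketch of $u$ is effectively an attempt to reprove Theorem~2.1; the cleaner route is to quote it.
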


The ``if" direction of Proposition~\ref{p.refe}  follows immediately from the proposition in Nemirovski~\cite{N}, which is
a corollary of Theorem~1.1 of Duval--Sibony stated above. The proof in the other direction  can be deduced from the content of Section~\ref{p.easy}.

Further, Duval--Sibony theorem~\cite[Theorem 3.1]{DS} states the following: {\it a smooth compact totally real manifold in~$\mathbb C^n$ is rationally convex if and only if it is isotropic with respect to a K\"ahler form in~$\mathbb C^n$}.
Our main result is a generalization of this theorem.

Finally, Duval--Sibony~\cite[Theorem 2.1]{DS} gave the following characterization of the rationally convex hull of an arbitrary compact set $S\subset \mathbb C^n$: {\it for every $z\notin R(S)$ there exists a smooth positive closed
$(1, 1)$-form $\omega$ that is strictly positive at $z$ and vanishes in a neighbourhood of $R(S)$.
} This gives a way to construct the K\"ahler form with respect to which a totally real rationally convex
manifold $S$ is isotropic. In the other direction the proof of Duval--Sibony  relies on the following result
(cf.~\cite[Lemma 1.2]{DS}):
{\it Suppose $\varphi$ is a continuous plurisubharmonic function on $\mathbb C^n$, and $h$ is a holomorphic function on some domain $V\subset \mathbb C^n$. Assume that
\[
K = \{ z\in V:  |h(z)| \ge e^{\varphi(z)}\}
\]
is compact. Then for every $z\in \mathbb C^n \setminus K$ there exists an entire holomorphic function $f$ such that $f(z)=0$ but the hypersurface $\{f=0\}$ omits $K$.
}Algebraic approximation of $f$ then shows that the set $K$ is rationally convex, and so the proof of rational convexity of $S$ boils down to the construction of the required functions $\varphi$ and $h$ so that $S=K$. These ideas will be used in the proof of our main result.

Rational convexity is important, in particular, in view of the Oka--Weil theorem, see, e.g., Stout~\cite{St}.
It states that if $S$ is a rationally convex compact, then any function holomorphic on $S$ can be approximated uniformly on $S$ by rational functions with poles off $S$. By Berndtsson~\cite{Be}, any continuous function on $S$ can be approximated by functions holomorphic
on $S$. This gives the proof of Corollary~\ref{c.approx}: by Theorem~\ref{t.BS} the set $S$ is rationally convex, and so combining the Oka--Weil theorem with Berndtsson we obtain the required approximation.

Conversely, if $S$ is a compact such that any continuous function on $S$ can be approximated uniformly on $S$ by rational functions with poles off $S$, then $S$ is rationally convex, see Stout~\cite[Theorem 1.2.10]{St}. Combined with Corollary~\ref{c.approx} this implies that any compact subset of a rationally convex totally real set  $S$ is itself rationally convex. This gives the proof of Corollary~\ref{c.global} in one direction; see Section~\ref{p.cor} for a complete proof.
Note that this simple argument does not imply Theorem~\ref{t.BS} because a totally real set is not necessarily contained in a totally real manifold, as we will see in the next section.

\section{Totally real sets}\label{s.trs}
Recall that a smooth manifold $M$ is totally real, if for any point $p\in M$, the tangent plane $T_pM$ does not contain any complex directions. A generalization of this is the notion of a totally real set.
In this section we give a quick introduction to this subject and then we define a special subclass of totally real sets that we call {\it regular}. Totally real sets can be defined on arbitrary complex manifolds, but for simplicity we restrict our attention to compacts in~$\mathbb C^n$, a general reference to totally real sets is Stout~\cite{St}.

\begin{defn}\label{d.TRS}
A compact subset $S \subset \mathbb C^n$ is called a {\it totally real set} if there exist a neighbourhood $U$ of $S$ in $\mathbb C^n$ and a nonnegative strictly plurisubharmonic function $\varphi(z)$ defined on $U$ such that $S=\{\varphi=0\}$.
\end{defn}

In fact, being a totally real set is a local notion.  More precisely, the following holds: {\it if for every point $p$ in a compact set $S$ there exist a neighbourhood $U_p$ of $p$ in $\mathbb C^n$ and a nonnegative smooth strictly plurisubharmonic function $\varphi_p$ such that $S \cap U_p = \{z \in U_p : \varphi_p (z) =0\}$, then $S$ is a totally real set.} This can be proved by considering a locally finite cover of $S$ by open sets $U_p$ and using a partition of unity argument (see Lemma 6.1.3 of Stout \cite{St}).
In particular, it follows that any compact totally real submanifold $M$ of $\mathbb C^n$ is a totally real set. Indeed, it is well-known that locally the square-distance function to $M$ is strictly plurisubharmonic thus giving the required function $\varphi_p$ near every point $p\in M$.

The following result was proved by Harvey--Wells~\cite{HW1}:
{\it Let $\varphi$ be a nonnegative  strictly plurisubharmonic function of class $C^{k+1}$, $k\ge 1$, on the open  set $U \subset \mathbb C^n$, and let $S$ be its zero locus. For every $p\in S$ there exists a neighbourhood  $U_p$ of $p$ and a totally real manifold $M_p$ of class $C^k$ in $U_p$ such that
$S \cap U_p \subset M_p$.} (In this paper we avoid questions of minimal finite smoothness required for the arguments to go through and simply assume $C^\infty$-smoothness of all the objects involved.)
On the other hand, suppose that $S\subset M$ is a compact subset of a totally real submanifold $M\subset \mathbb C^n$. Then $S$ can be represented as the zero set of a smooth function $h$ (see Lemma 1.4.13 of Narasimhan \cite{Na}). By taking $\varphi$ to be the square-distance function to $M$ and a sufficiently large even integer $m$ we see that the function $\varphi(z) + h^m(z)$ is strictly plurisubharmonic in a small neighbourhood of $M$ and its  zero locus is exactly $S$. This shows that {\it a compact $S$ is a totally real set if and only if $S$ is locally contained in a totally real manifold}.

A natural question is whether any totally real set is globally contained in some smooth totally real manifold. A positive answer to this question would of course undermine the importance of this class of compacts. This is however not the case. Consider the following example due to Chaumat--Chollet~\cite{ChCh}.

\begin{example}\label{e.CC}
Consider the map $F:\mathbb R^3 \to \C^3$ given by
\[
F (t_1, t_2, t_3) = \left(t_1 \cos t_3, t_1 \sin t_3,  t_2\, e^{i t_3/2}\right).
\]
One can verify that this is a totally real immersion of $\mathbb R^3 \setminus \{t_1=0\}$ into
$\mathbb C^3$. The restriction of $F$ to a subdomain
\[
D = \{ t\in \mathbb R^3 : (t_1,t_2,t_3) \in (0,2) \times (-1,1) \times \mathbb R\}
\]
can be seen as the universal cover of its image $\Sigma:=F(D)$, which is a totally real submanifold of $\mathbb C^3$ of dimension 3. One can see that $D$ contains the infinite strip
\[
T=\{t\in \mathbb R^3 : t_1=1, -1/2<t_2<1/2, t_3\in\mathbb R\},
\]
which is mapped by $F$ onto a compact subset of $\Sigma$. The set $M=F(T)$ is a M\"obius strip.
Consider the set $S = M \cup \Delta$, where $\Delta$ is a disc
\[
\Delta = \{z\in \mathbb C^3 : x_1^2 + x_2^2 \le 1, \ y_1=y_2=z_3 =0\}.
\]
Then
\[
\Gamma = M \cap \Delta = \{z\in \mathbb C^3: x_1^2+x_2^2 =1, y_1=y_2=z_3=0\}.
\]
One can verify that $S\setminus\{0\}\subset \Sigma$ and that $\Delta$ is contained in $\mathbb R^3_x$. Hence, $S$ is a totally real set. However, $S$ is not contained in a totally real submanifold of an open set in $\mathbb C^3$. Indeed, suppose $\Sigma_0$ is a totally real 3-dimensional manifold that contains $S$. The tangent bundle $T\Sigma_0$ when restricted to $\Delta$ is trivial, since $\Delta$ is contractible, so in particular, $T\Sigma_0|_{\Gamma}$ is trivial. For each $p\in\Gamma$, $T_p \Sigma_0$ contains both $T_p M$ and $T_p \mathbb R^2_{(x_1,x_2)}$. But this is impossible because no neighbourhood of $\Gamma$ in $M$ is orientable.  $\diamond$
\end{example}

Let $S\subset \mathbb C^n$ be a totally real set. By the discussion above there exists a locally finite cover $\{U_\alpha\}_{\alpha\in A}$ of $S$ by open sets in $\mathbb C^n$ such that for each $\alpha\in A$, the set $S\cap U_\alpha$ is contained in some totally real submanifold $M_\alpha$ of $U_\alpha$. Then $\{M_\alpha\}_{\alpha\in A}$ is a cover of $S$ by totally real manifolds. Note that by compactness of $S$ the set $A$ can always be chosen to be finite.

\begin{defn}
We say that $\{M_\alpha\}_{\alpha\in A}$ is a {\it regular cover} of $S$ if any intersection of manifolds in $\{M_\alpha\}$ is either empty or is a smooth manifold. We say that a totally real set $S$ is {\it regular}, if it admits a regular cover $\{M_\alpha\}$.
\end{defn}

As an example, consider a compact real-analytic set $S\subset \mathbb C^n$ of dimension at most $n-1$ which has only isolated singularities and which is a totally real set (in the sense of Definition~\ref{d.TRS}). Then $S$ is a regular totally real set. Indeed, the regular part of $S$ is a disjoint union of totally real manifolds (in general not of the same dimension even if $S$ is irreducible), denote these by $M_1, M_2, \dots, M_k$. Let $q_1, q_2,\dots, q_m$ be the singular points of $S$. For each $q_j$ there exists a local totally real manifold $M_{q_j}$ that contains a neighbourhood of $q_j$ in $S$. We may choose these manifolds to be disjoint. Then $\{M_1, \dots, M_k, M_{q_1},\dots, M_{q_m}\}$ is a regular cover of $S$, since $M_j \cap M_{q_l}$ is either empty or is an open subset of $M_j$.
A detailed discussion of totally real analytic sets can be found in Wells~\cite{We}. Another instance of a regular totally real set is given in Example~\ref{e.CC}: the manifolds $\Sigma$ and $\Delta$ form a regular cover of the set $S$. Thus, a regular totally real set in general is not contained in a totally real manifold. Finally, we note that we do not have any examples of totally real sets that are not regular.

A priori, a totally real set may have no regularity, as any compact subset of a totally real manifold is a totally real set. This is a general difficulty when working with such objects. As a way to overcome this problem we now construct a new totally real set that contains the given regular totally real set and is a finite union of smooth manifolds with the special intersection property described above. This construction will be used in the proof of the main theorem. For the last statement in the proposition below recall that  a closed subset ${X}$ of a topological space $Y$ is called a {\it neighbourhood retract} of $Y$ if $X$ is a retract of some open subset of $Y$ that contains $X$; the subset $X$ is called an {\it absolute neighbourhood retract} or ANR
(for the class of metrizable topological spaces), if $X$ is a neighbourhood retract of $Y$ whenever $X$ is a closed subset of a metric space $Y$. For further details see, e.g., Fritsch--Piccinini~\cite{FP}.

\begin{prop}\label{p.BS}
Let $S\subset \mathbb C^n$ be a regular totally real set and let $\delta>0$ be arbitrary. Then there exists a collection of smooth totally real manifolds $\Sigma_{k_j}$, $j=1,\dots, r$,
$\dim \Sigma_{k_j}=k_j$, $k_1<k_2< \dots < k_r$, with the following  properties

\begin{itemize}
\item[(i)] $S\subset \Sigma :=\bigcup_{1\le j \le r} \Sigma_{k_j}$;

\item[(ii)] if $p \in \Sigma_{k_j} \cap \Sigma_{k_l}$, $k_j < k_l$, then there exists a neighbourhood $V_p\subset \mathbb C^n$ of $p$ such that  $V_p \cap \Sigma_{k_j} \subset \Sigma_{k_l}$. In particular, $\{\Sigma_{k_j}\}_{j=1,\dots,r}$ is a regular cover of $S$;

\item[(iii)] $\overline{\Sigma}$ is a totally real set;

\item[(iv)] $\overline \Sigma$ is contained in the $\delta$-neighbourhood of $S$.

\item[(v)] The compact $\overline\Sigma$ can be stratified to satisfy Whitney condition (B). It is also ANR, in particular, there exists a neighbourhood basis of $\overline\Sigma$ that retracts to $\overline\Sigma$.

\end{itemize}
\end{prop}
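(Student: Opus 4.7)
The plan is to construct $\Sigma$ by starting from the given regular cover $\{M_\alpha\}$ of $S$, passing to a finite subcover, trimming it inside the $\delta$-tube around $S$, and stratifying the resulting union $X:=\bigcup_i M_i$ by the incidence pattern of the cover at each point. First, by compactness I pass to a finite regular cover $\{M_1,\ldots,M_s\}$. Using the Harvey--Wells local structure for $S$, I choose a finite family of totally real manifolds $N_1,\ldots,N_m$ in $\mathbb C^n$, each containing a neighbourhood of some point of $S$ in $S$, so that the open sets on which the $N_j$ are defined cover $S$. I then intersect each $M_i$ with a small open neighbourhood $U$ of $S$ with $\overline{U}$ inside the $\delta$-neighbourhood of $S$, and further shrink so that the resulting submanifold of $\mathbb C^n$, which I continue to call $M_i$, lies inside one of the $N_j$. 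This arrangement is essential for the construction of the plurisubharmonic function in (iii).

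For the stratification, fix $p\in X$ and set $B(p)=\{i:p\in M_i\}$. The intersection $I_{B(p)}:=\bigcap_{i\in B(p)} M_i$ is a smooth manifold near $p$ by regularity of the cover, and I define the stratum through $p$ as the locally closed set $I_{B(p)}\setminus\bigcup_{B'\supsetneq B(p)} I_{B'}$. As $p$ varies these local definitions glue into a finite partition of $X$ by smooth submanifolds of $\mathbb C^n$. Grouping the resulting strata by dimension produces smooth totally real manifolds $\Sigma_{k_1},\ldots,\Sigma_{k_r}$ of distinct dimensions $k_1<\cdots<k_r$, each a disjoint union of same-dimensional strata, with $\bigsqcup_j\Sigma_{k_j}=X$. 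Property (i) holds since $S\subset X$; property (ii) is vacuous because the strata are pairwise disjoint, and this disjointness trivially makes $\{\Sigma_{k_j}\}$ a regular cover of $S$; and (iv) follows from the trimming.

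For (iii), I build a smooth nonnegative strictly plurisubharmonic function vanishing exactly on $\overline\Sigma$ by patching. Near any $q\in\overline\Sigma$, the arrangement in the first step ensures $\overline\Sigma$ lies locally inside some $N_j$. Applying Narasimhan's lemma to the closed subset $\overline\Sigma\cap N_j\subset N_j$ gives a smooth nonnegative function $h_j$ on $N_j$ with zero set $\overline\Sigma\cap N_j$; extending $h_j$ to a neighbourhood of $N_j$ in $\mathbb C^n$ via a smooth tubular retraction, the function $\rho_j(z):=\dist(z,N_j)^2+h_j(z)^{2m}$ is smooth, nonnegative, strictly plurisubharmonic in a neighbourhood of $N_j$ for $m$ large enough, and vanishes exactly on $\overline\Sigma\cap N_j$---the same computation already used in Section~3 to see that $S$ is a totally real set whenever it lies in a totally real manifold. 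A partition-of-unity sum $\varphi=\sum_j\chi_j\rho_j$ is then a smooth nonnegative strictly plurisubharmonic function on a neighbourhood of $\overline\Sigma$ whose zero locus is exactly $\overline\Sigma$, since every summand is nonnegative and the cross terms $\partial\chi_j\otimes\bar\partial\rho_j$ vanish at the zeros of $\rho_j$. For (v), the strata constructed above meet cleanly---each being relatively open in a smooth intersection $I_B$---so Whitney's conditions (A) and (B) are verified by the standard tangent-space argument for cleanly intersecting submanifolds; a compact Whitney (B) stratified space with smooth strata is triangulable (Goresky, Verona), hence a compact polyhedron, hence an absolute neighbourhood retract, and the retracting neighbourhood basis is a standard consequence of the ANR property.

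The main obstacle I anticipate is the first step: arranging the trimming so that each shrunken $M_i$ genuinely lies inside one of the Harvey--Wells manifolds $N_j$, rather than merely being close to one. This forces a careful pre-refinement of the regular cover by intersections with the $N_j$, and one must verify that the refined cover is still regular (that the new intersections are smooth manifolds). The combinatorial bookkeeping required here is the most technical part of the argument and is where the hypothesis that $S$ is a regular totally real set, and not merely a totally real set, is genuinely used.
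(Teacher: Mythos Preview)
There is a genuine gap in the stratification step, and it propagates through the rest of the argument.

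Your strata $I_{B}\setminus\bigcup_{B'\supsetneq B}I_{B'}$ are in general \emph{not} smooth manifolds, because the sets $I_{B'}$ need not be closed in $I_B$: the manifolds $M_i$ in a regular cover are only required to be submanifolds of open subsets of $\mathbb C^n$, so their frontiers can meet $I_B$. Concretely, take $S=[0,1]\times\{0\}\subset\mathbb R^2\subset\mathbb C^2$ with the regular cover $M_1=\{x_1^2+x_2^2<4\}$ (a totally real $2$-disk) and $M_2=(-1,0.6)\times\{0\}$ (a $1$-dimensional arc), with $U_1=\mathbb C^2$ and $U_2=\{\Re z_1<0.55\}$. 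Trimming to a $\delta$-tube around $S$ does not change the picture near $(0.6,0)$. Your $\Sigma_2=M_1\setminus M_2$ contains the point $(0.6,0)$, and near it looks like a disk with an open half-segment deleted---not a manifold. The paper avoids precisely this by \emph{surgery}: whenever two local pieces meet along a lower-dimensional set $K$, one removes a closed neighbourhood of $\overline K$ (not just $K$ itself), after checking that $\overline K$ misses the relevant part of $S$.

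The same example shows a second failure: your disjoint $\Sigma_{k_j}$ do not form a cover of $S$ in the sense used in the paper (each member must contain $S\cap U_\alpha$ for some open $U_\alpha$). At $(0.6,0)\in S$, points of $S$ immediately to the left lie in $\Sigma_1=M_2$ while $(0.6,0)$ itself lies in $\Sigma_2$; neither $\Sigma_1$ nor $\Sigma_2$ contains a full neighbourhood of $(0.6,0)$ in $S$. This is exactly why the paper's $\Sigma_{k_j}$ are built to \emph{overlap}, with the nesting property (ii) nontrivially realized: the lower-dimensional piece sits inside the higher-dimensional one near their intersection. That nesting is also what makes the cover usable in the proof of the main theorem (e.g.\ in Lemma~\ref{l.BS}), so a construction with vacuous (ii) would not serve the intended application even if it satisfied the letter of the proposition.

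Finally, part (v) asks for a Whitney (B) stratification of the \emph{closure} $\overline\Sigma$, and your incidence stratification only partitions $\Sigma$. The passage to $\overline\Sigma$ introduces the boundaries $b\Sigma_{k_j}$, whose mutual intersections need not be transverse a priori; the paper handles this by first arranging each $\overline{\Sigma_{k_j}}$ to be a manifold with boundary and then perturbing the $b\Sigma_{k_j}$ to pairwise transversality before writing down an explicit boundary-aware stratification. The phrase ``cleanly intersecting submanifolds'' does not cover this situation, and the obstacle you flag (fitting the $M_i$ inside Harvey--Wells charts $N_j$) is comparatively minor---indeed, once properties (i)--(ii) are obtained, (iii) follows immediately since every point of $\overline\Sigma$ lies in a totally real manifold by construction.
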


Throughout the paper $B(z,\rho)$ denotes the Euclidean ball in $\mathbb C^n$ of radius $\rho$ centred at $z$.

\begin{proof}
Let $\mathcal M = \{M_\alpha\}$ be a finite regular cover of $S$. For any $p\in S$ consider the manifold $M_p = \cap_{p\in M_\alpha} M_\alpha$, i.e., the intersection of all manifolds in the cover that contain $p$. From the definition of a regular cover, in some neighbourhood $U_p$ of $p$, $M_p$ is a totally real submanifold  of $U_p$ of some dimension $k_p$ such that $S\cap U_p \subset M_p$. Note that if $k_p=0$ then $p$ is an isolated point of $S$. We call $M_p$ the canonical manifold at $p$ with respect to $\mathcal M$, and $k_p$ the index of $p$. Let $0 \le k_1 < k_2 < \dots < k_r \le n$ be the list of all indices appearing in $S$. Our proof is a reverse induction on $k_j$, $j=1,\dots,r$. Define
\[
S_{k_j} = \{p\in S : \text{index}(p) = k_j\}.
\]
We claim that the set $\tilde S = \cup_{j\le l} S_{k_j}$ is an open subset of $S$ (in the topology induced by $\mathbb C^n$) for all $l$, and hence, $S\setminus \tilde S$ is a closed subset of $\mathbb C^n$. Indeed, if $p\in \tilde S$, then in a neighbourhood $V$ of $p$, the set $S\cap V$ is contained in a manifold of dimension $k_j$ for some $j \le l$, and therefore,
$\text{index}(q) \le k_j$ for all $q \in V_p \cap S$. Therefore, $p$ belongs to $\tilde S$ together with its small neighbourhood, which shows that $\tilde S$ is open, and $S\setminus \tilde S$ is closed.

Consider $S_{k_r}$, the closed subset of $S$ containing points of the top index. For every $p\in S_{k_r}$ there exists
$\e = \e(p)>0$ such that the canonical manifold $M_p$ is a submanifold of $B(p, 2\e)$, and $S \cap B(p, 2\e) \subset M_p$.  Set
\[
M(p,\e)=M_p \cap B(p, \e).
\]
Then $S_{k_r}$ admits a finite cover by manifolds $M(p_j,\e)$, $p_j\in S_{k_r}$. We now show that after some surgery on the manifolds $M(p_j,\e)$, they can be glued together to form one manifold. Indeed, suppose $M(p_1,\e) \cap M(p_2,\e) =K\ne \varnothing$.
From the properties of the regular cover $\mathcal M$, the set $K$ is a submanifold of $B(p_1, \e) \cap B(p_2,\e)$.
If $\dim K < k_r$, then $K\cap S_{k_r} = \varnothing$, as otherwise, $S_{k_r}$ would contain points of index $\le \dim K$.
Further, since the same property also holds in the balls of radius $2\e$, we conclude that $\overline K \cap S_{k_r} = \varnothing$. Therefore, we may remove from $M(p_1,\e)$ and $M(p_2,\e)$ a small closed neighbourhood of $\overline K$ without affecting their intersection with $S_{k_r}$. The only other possibility is that $\dim K = k_r$, which means that the manifolds $M(p_1,\e)$ and $M(p_2,\e)$ agree on the intersection and can be glued together. Repeating this procedure for all $M(p_j,\e)$, we conclude that the manifolds $M(p_j,\e)$ can be glued together to form a manifold $\Sigma_{k_r}$ of dimension $k_r$ that contains~$S_{k_r}$. The manifold $\Sigma_{k_r}$ has only finitely many connected components.

We now continue by induction. Suppose that for $m>1$ we have constructed manifolds
$\Sigma_{k_m}, \Sigma_{k_{m+1}}\dots, \Sigma_{k_r}$, $\dim  \Sigma_{k_j} = k_j$, that satisfy the intersection property (ii) of the proposition and such that
\[
\bigcup_{m\le j\le r} S_{k_j} \subset \bigcup_{m\le j\le r} \Sigma_{k_j} .
\]
We outline the construction of the manifold $\Sigma_{k_{m-1}}$. Let
\begin{equation}\label{e.R}
R_m = \cup_{j\ge m}\Sigma_{k_j}.
\end{equation}

\begin{lemma}\label{l.1}
$S_{k_{m-1}} \setminus R_m$ is a closed subset of $S$.
\end{lemma}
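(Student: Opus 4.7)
The plan is to reduce Lemma~\ref{l.1} to showing that the set $R_m\cap S$ is open in the relative topology of $S$. Granting this, I would argue as follows: by the inductive hypothesis $\bigcup_{j\ge m}S_{k_j}\subset R_m$, hence
\[
S_{k_{m-1}}\setminus R_m \;=\; \Bigl(\bigcup_{j\ge m-1}S_{k_j}\Bigr)\setminus R_m \;=\; \Bigl(\bigcup_{j\ge m-1}S_{k_j}\Bigr) \cap (S\setminus R_m).
\]
The first factor is closed in $S$ because its complement $\bigcup_{j\le m-2}S_{k_j}$ is open in $S$ by the openness argument preceding the lemma. Thus if $R_m\cap S$ is open in $S$, the set $S_{k_{m-1}}\setminus R_m$ appears as an intersection of two closed subsets of $S$, and the lemma follows.

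The substantive step is therefore showing that $R_m\cap S$ is open in $S$. For this I would take $p\in R_m\cap S$, fix $j\ge m$ with $p\in \Sigma_{k_j}$, and construct a neighbourhood $V$ of $p$ in $\mathbb{C}^n$ with $S\cap V\subset \Sigma_{k_j}$. By the construction of $\Sigma_{k_j}$ as a gluing of the pieces $M_q\cap B(q,\e)$ for $q\in S_{k_j}$ (with only small neighbourhoods of lower-dimensional intersections removed, which by design do not meet $S_{k_j}$), the point $p$ lies in $M_q$ for some $q\in S_{k_j}$, and the manifold $\Sigma_{k_j}$ coincides with $M_q$ in some neighbourhood of $p$ in $\mathbb{C}^n$.

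The crucial observation is the set-theoretic inclusion $M_p\subset M_q$. This falls right out of the definitions: since $M_q=\bigcap_{q\in M_\alpha}M_\alpha$ and $p\in M_q$, every cover element $M_\alpha$ that contains $q$ also contains $p$, so the intersection $M_p=\bigcap_{p\in M_\beta}M_\beta$ is taken over a collection at least as large as that defining $M_q$, yielding $M_p\subset M_q$. Shrinking $V$ so that both $S\cap V\subset M_p$ (available because $M_p$ is the canonical manifold at $p$) and $M_q\cap V\subset \Sigma_{k_j}$ (available because $\Sigma_{k_j}$ is a smooth submanifold locally coinciding with $M_q$ near $p$), I arrive at
\[
S\cap V \;\subset\; M_p\cap V \;\subset\; M_q\cap V \;\subset\; \Sigma_{k_j} \;\subset\; R_m.
\]
The main technicality I expect to require care is verifying that the surgery in the construction of $\Sigma_{k_j}$ does not prevent the existence of such a $V$; but since $p$ is given to lie in the (post-surgery) smooth manifold $\Sigma_{k_j}$, a sufficiently small Euclidean ball around $p$ will meet $\Sigma_{k_j}$ in an open piece of~$M_q$, which is precisely what is needed.
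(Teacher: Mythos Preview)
Your proof is correct and follows essentially the same logic as the paper's: both rest on the closedness of $\bigcup_{j\ge m-1} S_{k_j}$ in $S$ and the openness of $R_m\cap S$ in $S$. The paper phrases this as a sequence argument and simply asserts the openness step (``$p_0$ cannot be a point in $R_m$ as otherwise it would be contained in $R_m$ together with a small neighbourhood''), whereas you supply a detailed justification of that step via the inclusion $M_p\subset M_q$ of canonical manifolds.
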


\begin{proof}
Indeed, if $(p_\nu)$ is a sequence in $S_{k_{m-1}} \setminus R_m$ that converges to some point $p_0$, then $p_0 \in \cup_{j\ge m-1} S_{k_j}$ , since $\cup_{j\ge m-1} S_{k_j}$ is a closed set as shown above. On the other hand, $p_0$ cannot be a point in $R_m$ as otherwise it would be contained in $R_m$ together with a small neighbourhood. And since $R_m$ contains all points in $\cup_{j\ge m}S_{k_j}$,
we conclude that $p_0 \in S_{k_{m-1}} \setminus R_m$. This shows that
$S_{k_{m-1}} \setminus R_m$ is closed.
\end{proof}

 As in the case of $S_{k_r}$ discussed above, for every point $p\in S_{k_{m-1}} \setminus R_m$ there exists $\e=\e(p)$ such that the canonical manifold $M_p$ is a submanifold of $B(p, 2\e)$, and $S \cap B(p, 2\e) \subset M_p$. Then the set $S_{k_{m-1}} \setminus R_m$ can be covered by a finite collection of manifolds $M(p_j,\e)=M_{p_j} \cap B(p_j, \e)$,
$p_j \in S_{k_{m-1}} \setminus R_m$. As above, if the intersection of two such manifolds is nonempty, then either a small neighbourhood of the intersection can be removed from $M(p_j, \e)$ without affecting their intersection with $S_{k_{m-1}}$, or the manifolds coincide near the intersection. After removing all lower-dimensional intersections, $M(p_j, \e)$ can be glued together to form a
${k_{m-1}}$-dimensional manifold $\Sigma_{k_{m-1}}$ that contains $S_{k_{m-1}} \setminus R_m$. Suppose now that
$K = \Sigma_{k_{m-1}} \cap \Sigma_{k_{j}}$ for some $j>m-1$, $K\ne \varnothing$. Note that $K$ is constructed as an intersection of some submanifolds in $\mathcal M$, and therefore, by the regularity of $\mathcal M$, it is a smooth manifold. Suppose that $\dim K < k_{m-1}$, and let $p\in \overline K \cap \Sigma_{k_{m-1}} \cap S$. Then $p$ belongs to the closure of some manifold $M(q,\e)$ of dimension $k_j$ that was used in the construction of $\Sigma_{k_{j}}$. By increasing this $\e$ slightly we see that near $p$  the set $S$ is locally contained in a totally real manifold of dimension $=\dim K$, i.e., the index of $p$ is less than $k_{m-1}$. Since the points of index less than $k_{m-1}$ form an open set  in $S$, there exists a closed neighbourhood of $K$ that is disjoint from $S_{k_{m-1}}$. By removing this neighbourhood from $\Sigma_{k_{m-1}}$ we can ensure that the latter does not intersect $\Sigma_{k_{j}}$ along a manifold of lower dimension. And if $\dim K = k_{m-1}$, then it simply means that $K$ is an open subset of
$\Sigma_{k_{m-1}}$. A similar analysis holds when $p\in \overline K \cap \Sigma_{k_{j}} \cap S$ or
when $p$ is a point in the intersection of the boundaries of $\Sigma_{k_{j}}$ and $\Sigma_{k_{m-1}}$.
This gives the manifold $\Sigma_{k_{m-1}}$ that satisfies (ii).

This inductive procedure gives the required manifolds of all dimensions $k_1, \dots, k_r$. Note that if $k_1 = 0$, then $S_{k_1}$ consists of isolated points, these are open sets in $S$. Further, the set
$S \setminus \bigcup_{1< j\le r} \Sigma_{k_j}$ consists of finitely many such points, and their union is $\Sigma_{k_1}$.  The cover $\{\Sigma_{k_j}\}$ is regular because intersection of any of the manifolds in the cover is an open subset of the manifold of the smallest dimension in the intersection. This verifies properties (i) and~(ii).

By construction, every point in the closure of $\Sigma_{k_j}$ belongs to a totally real manifold. This implies (iii). Finally, property (iv) can be achieved by choosing all $\e$ involved in the construction of $\Sigma_{k_j}$ to be less than the given $\delta$.

\medskip

Proof of (v). To satisfy this property we will need to further modify the set
\begin{equation}\label{e.clS}
\overline \Sigma =\overline{\bigcup_{1\le j \le r} \Sigma_{k_j}} .
\end{equation}
Returning to the inductive construction of $\Sigma$, note that the set $S_{k_r}$ is compactly contained
in~$\Sigma_{k_r}$. Therefore, after a small shrinking followed by a small perturbation of the boundary
of $\Sigma_{k_r}$ we may assume that $\overline{\Sigma_{k_r}}$ is a manifold with boundary, in particular, the boundary $b\Sigma_{k_r}$ is a smooth closed manifold of dimension $k_r -1$. Similarly, by Lemma~\ref{l.1} for all $1 < m \le r$, the set $S_{k_{m-1}}\setminus R_m$, where $R_m$ is defined by~\eqref{e.R}, is compactly contained in
$\Sigma_{k_{m-1}}$. Again, after a small shrinking and perturbation we may assume that $\overline{\Sigma_{k_{m-1}}}$ is a
manifold with boundary that compactly contains $S_{k_{m-1}}\setminus R_m$. We conclude that
$\Sigma_{k_j}$, $1 \le j \le r$, can be chosen so that $\overline{\Sigma_{k_j}}$ are manifolds with boundary and properties (i)-(iv) still hold. By construction, each $\overline{\Sigma_{k_j}}$ is compactly contained in a bigger manifold of the same dimension, which we denote by $\tilde \Sigma_{k_j}$ (the original $\Sigma_{k_j}$). Also note that $b\Sigma_{k_j}$ is a closed submanifold of $\tilde \Sigma_{k_j}$.

In the manifold $\tilde \Sigma_{k_r}$ consider the closed submanifold $b\Sigma_{k_r}$ and some
$\Sigma_{k_j}$, $j< r$, $k_j>0$, that has nonempty intersection with $\Sigma_{k_r}$. By construction of
$\Sigma_{k_j}$ and from property (ii) of the proposition, we may assume that near $b\Sigma_{k_r}$
we have the inclusion $\tilde\Sigma_{k_j} \subset \tilde \Sigma_{k_r}$. Taking $\tilde \Sigma_{k_r}$ as the ambient space, we may apply Thom's transversality theorem~\cite{GG} to conclude that after a small perturbation of $b\Sigma_{k_r}$, the manifold $b\Sigma_{k_r}$ intersects $\tilde \Sigma_{k_j}$ transversely. After a small perturbation of $b \Sigma_{k_j}$ we may further assume that
$b\Sigma_{k_r}$ and $b \Sigma_{k_j}$ also intersect transversely (note that the latter condition in general does not follow from the transversality of $b\Sigma_{k_r}$ and $\tilde \Sigma_{k_j}$).
Finally, since transversality is stable under small perturbations, we may repeat this procedure for all $j \ne r$ to ensure that all $\tilde \Sigma_{k_j}$ and
$b\Sigma_{k_j}$, $j\ne r$, intersect $b\Sigma_{k_r}$ transversely.

We now continue by induction: once a small perturbation of $b\Sigma_{k_r}$ ensures that intersections
of $b\Sigma_{k_r}$ with manifolds $\Sigma_{k_j}$ and $b\Sigma_{k_j}$ are transverse for all $j<r$, we may continue with perturbation of $b\Sigma_{k_{r-1}}$ within $\tilde \Sigma_{k_{r-1}}$ so that its intersection with
$\Sigma_{k_j}$ and $b\Sigma_{k_j}$ is transverse for all $j < r-1$. Then repeat this for all  $b\Sigma_{k_j}$, $1\le j\le r$. This is possible because the procedure requires a finite number of perturbations and transversality is stable under small perturbations.

To continue with our argument we note the following elementary result whose proof is left for the reader.

\begin{lemma}\label{l.trans}
Let $X$ be a smooth manifold, $Y \subset X$ be a closed submanifold, and $Z \subset X$ be a relatively compact manifold with boundary $bZ$. If $Z$ and $bZ$ intersect $Y$ transversely, then $Y \cap Z$ is a manifold with boundary $bZ \cap Y$.
\end{lemma}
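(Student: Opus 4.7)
The plan is to thicken $Z$ to a boundaryless submanifold of $X$ and then apply the standard transverse intersection theorem twice. Using a collar of $bZ$ in $Z$, extend $Z$ to an open submanifold $\tilde Z\subset X$ of the same dimension as $Z$, so that $Z\subset\tilde Z$ and $bZ$ is a closed codimension-one submanifold of $\tilde Z$ which locally separates $\tilde Z$ into two components, one of which is the interior of $Z$. This is possible because $Z$ is a manifold with boundary inside $X$, so near $bZ$ there exist charts in which $Z$ is a closed half-subspace of the affine $k$-plane $\tilde Z$.

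The first step is to verify that $Y$ meets $\tilde Z$ transversely in $X$ at every point of $Y\cap Z$. At interior points this is exactly the hypothesis $Y$ transverse to $Z$. At a boundary point $p\in Y\cap bZ$, the inclusion $T_p bZ\subset T_p\tilde Z$ together with the hypothesis $T_p bZ+T_p Y=T_p X$ forces $T_p\tilde Z+T_p Y=T_p X$. Since transversality is an open condition, I can shrink $\tilde Z$ toward $Z$ so that $Y$ meets $\tilde Z$ transversely throughout $Y\cap\tilde Z$; the standard transverse intersection theorem then exhibits $Y\cap\tilde Z$ as a smooth submanifold of $\tilde Z$ of dimension $\dim Y+\dim Z-\dim X$, with $T_p(Y\cap\tilde Z)=T_p Y\cap T_p\tilde Z$.

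The second step is to show, inside the ambient manifold $\tilde Z$, that $bZ$ meets $Y\cap\tilde Z$ transversely. Given $v\in T_p\tilde Z\subset T_p X$ at $p\in Y\cap bZ$, decompose $v=a+b$ with $a\in T_p bZ$ and $b\in T_p Y$, which is possible because $bZ$ and $Y$ are transverse in $X$. Then $b=v-a$ lies in $T_p\tilde Z$, so $b\in T_p Y\cap T_p\tilde Z=T_p(Y\cap\tilde Z)$, yielding $T_p bZ+T_p(Y\cap\tilde Z)=T_p\tilde Z$. Consequently $Y\cap bZ=bZ\cap(Y\cap\tilde Z)$ is a closed codimension-one submanifold of the smooth manifold $Y\cap\tilde Z$.

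The final step is the separation argument. Since $bZ$ locally separates $\tilde Z$ with $\mathrm{int}(Z)$ on one side, the transverse codimension-one submanifold $Y\cap bZ$ locally separates $Y\cap\tilde Z$, with $Y\cap\mathrm{int}(Z)$ on one side and $Y\cap(\tilde Z\setminus Z)$ on the other. Hence $Y\cap Z=(Y\cap\mathrm{int}(Z))\cup(Y\cap bZ)$ is locally a closed half of the smooth manifold $Y\cap\tilde Z$ bounded by $Y\cap bZ$, which is exactly the local model of a manifold with boundary $Y\cap bZ$. The one place care is needed is precisely this separation step: concretely, a local defining function $g\le 0$ for $Z$ inside $\tilde Z$, with $dg$ nowhere vanishing on $bZ$, restricts to a function $g|_{Y\cap\tilde Z}$ whose differential is nowhere vanishing on $Y\cap bZ$ by the transversality established in the previous step, and $Y\cap Z$ is then cut out by $g|_{Y\cap\tilde Z}\le 0$, giving the manifold-with-boundary structure claimed.
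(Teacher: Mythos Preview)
Your proof is correct. The paper does not actually give a proof of this lemma; the authors call it an ``elementary result whose proof is left for the reader.'' Your argument supplies precisely the expected details: extend $Z$ across its boundary to a boundaryless submanifold $\tilde Z\subset X$ of the same dimension, apply the standard transverse-intersection theorem to obtain $Y\cap\tilde Z$ as a smooth manifold, verify the secondary transversality of $bZ$ with $Y\cap\tilde Z$ inside $\tilde Z$ via the linear-algebra computation you give, and then use a local defining function for $Z$ in $\tilde Z$ to carve out $Y\cap Z$ as a half-space with boundary $Y\cap bZ$. This is the natural approach and almost certainly what the authors had in mind; in fact, in the paper's application each $Z=\overline{\Sigma_{k_j}}$ already comes equipped with the thickening $\tilde Z=\tilde\Sigma_{k_j}$, so the first step is free in context.
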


We now give a locally finite stratification of the set $\overline \Sigma$ into smooth manifolds. Basically, it is obtained by taking connected components of all possible intersections and their complements of manifolds $\Sigma_{k_j}$ and $b\Sigma_{k_l}$, $1\le j,l \le r$. This can be formally organized by the following induction procedure. To begin with, consider
\begin{equation}\label{e.strat1}
\overline \Sigma_{k_r} = \Sigma_{k_r} \bigcup
\left(b\Sigma_{k_r} \setminus (\cup_{j=1}^{r-1} \overline{\Sigma_{k_j}})\right) \bigcup W_{r-1},
\end{equation}
where $W_{r-1} = b\Sigma_{k_r} \cap (\cup_{j=1}^{r-1} \overline{\Sigma_{k_j}})$ is a compact set that is contained in $\cup_{j=1}^{r-1} \overline \Sigma_{k_j}$. The connected components of the first two terms in the union in~\eqref{e.strat1} give a stratification of $\overline \Sigma_{k_r} \setminus W_{r-1}$ into disjoint smooth manifolds of dimension $k_r$ and $k_r - 1$.
This can be continued inductively: for $1< m \le r$, let $R_m$ be defined as in~\eqref{e.R}.
We write $\overline{R_m} = T_m \cup W_{m-1}$, where
\[
W_{m-1} = (\overline{(\overline \Sigma \setminus \overline{R_m})}) \cap \overline{R_m} , \ \
T_m = \overline{R_m} \setminus W_{m-1} .
\]
Note that for $m=r$, $\overline{R_r} = \overline{\Sigma_{k_r}}$, and so this decomposition agrees with~\eqref{e.strat1}. For $m<r$ we have
\[
T_m= \bigcup_{j\geq m} \left[\Sigma_{k_j} \cup \left( b\Sigma_{k_j} \setminus (\cup_{l<j} \overline \Sigma_{k_l})\right) \right] .
\]
Assuming that $T_m$ is already stratified, we give a stratification of $T_{m-1}$. Note that $T_m \subset T_{m-1}$, and $(T_{m-1} \setminus T_m) \subset \overline{\Sigma_{k_{m-1}}}$. Consider the following decomposition.
\begin{multline}\label{e.strat2}
\overline{\Sigma_{k_{m-1}}} \setminus T_m =
\Sigma_{k_{m-1}} \setminus (\cup_{j \ge m} \overline{\Sigma_{k_j}})
\bigcup_{j \ge m} (\Sigma_{k_{m-1}} \cap b \Sigma_{k_j}) \\
\bigcup_{j \ge m} \left[(b\Sigma_{k_{m-1}} \cap b \Sigma_{k_j}) \setminus (\cup_{l<m-1} \overline {\Sigma_{k_l}}) \right]
\bigcup \left[ b\Sigma_{k_{m-1}} \setminus (\cup_{j\ne m-1} \overline{\Sigma_{k_j}})\right] \bigcup W_{m-2},
\end{multline}
where again, $W_{m-2}$ is a compact set contained in $\cup_{j=1}^{m-2} \overline \Sigma_{k_j}$, or empty if $m=2$. By Lemma~\ref{l.trans}, all the terms above are smooth manifolds, and so  formula~\eqref{e.strat2} gives stratification of $\overline \Sigma_{k_{m-1}} \setminus T_m$ into connected manifolds of dimension $k_{m-1}$ and $k_{m-2}$.
Repeating this inductive procedure for all $m$ gives the required stratification of $\overline\Sigma$.

Observe that the obtained stratification of $\overline \Sigma$ satisfies the frontier condition, i.e.,
if $X$ and $Y$ are two strata and $Y\subset \overline X$, then $Y\subset \overline X \setminus X$.
We now claim that this stratification satisfies Whitney condition (B) (for a general reference on stratified spaces see, e.g., Trotman~\cite{Tro}). Recall that a stratified space satisfies Whitney condition
(B) if the following holds: Let $X$ and $Y$ be two adjacent strata of the stratification (i.e., $Y \subset \overline X \setminus X$). Suppose that the sequences $(x_j) \subset X$ and $(y_j) \subset Y$ both converge to a point $y\in Y$, the sequence of straight lines $l_j$ passing through points $x_j$ and $y_j$ converges to a line $l_0$, and the sequence of the tangent planes $T_{x_j}X$ converges to a plane $T_0$ as $j\to\infty$. Then $l_0 \subset T_0$. This condition, for example, holds  if $\overline X$ is a manifold with boundary, and $Y \subset bX$, or if $X$ is an open subset of a larger manifold with boundary and $Y$ is a manifold in the topological boundary of $X$.

To see that the stratification of $\overline\Sigma$ constructed above satisfies Whitney condition (B) assume that $X$ and $Y$ are some strata defined by~\eqref{e.strat1} or~\eqref{e.strat2} such that $Y \subset \overline X \setminus X$. Consider several cases, where $1 < m \le r+1$.
\begin{itemize}

\item[(a)] $X = \Sigma_{k_{m-1}} \setminus (\cup_{j \ge m} \overline{\Sigma_{k_j}}) $. In this case,
$X$ is an open subset of $\Sigma_{k_{m-1}}$ and the stratum $Y$ is a submanifold in its boundary. Therefore the Whitney condition (B) holds.

\item[(b)] $X= \Sigma_{k_{m-1}} \cap b \Sigma_{k_j}$ for some $j\ge m$.
Then $X$ is an open subset of the closed manifold $b\Sigma_{k_{j}}$, and $Y$ is contained in its boundary, so
Whitney condition (B) holds.

\item[(c)]  $X= (b\Sigma_{k_{m-1}} \cap b \Sigma_{k_j}) \setminus (\cup_{l<m-1} \overline {\Sigma_{k_l}})$ for some $j \ge m$. In this case $X$ is an open subset of $b\Sigma_{k_{m-1}}$ with $Y$ in its boundary. Again, Whitney condition (B) holds.

\item[(d)]  $X= b\Sigma_{k_{m-1}} \setminus (\cup_{j\ne m-1} \overline{\Sigma_{k_j}})$. This case follows in a similar manner.

\end{itemize}
This shows that the stratification of $\overline \Sigma$ satisfies Whitney condition (B).

Stratified spaces satisfying Whitney condition (B) are triangulable, see, e.g., Goresky~\cite{Go}.  It is well-known
(e.g., Fritsch--Piccinini~\cite[Theorem 3.3.10]{FP}) that any CW-complex, in particular, a triangulable space, is ANR. This immediately implies part (v) of the proposition.
\end{proof}
Let us conclude this section with two remarks regarding Proposition~\ref{p.BS}.
\begin{enumerate}
\item The surgeries performed in the proof introduce only finitely many holes. Together with the observation that each manifold in the construction of $\overline\Sigma$ can be chosen to be contractible, this ensures that the stratified space $\overline\Sigma$ has finitely generated first homology class.
\item A more sophisticated topological argument can be used to prove that $\overline\Sigma$ is, in fact, a neighbourhood deformation retract, but we do not need it for the purpose of this paper.
\end{enumerate}

\section{Proof of Theorem~\ref{t.BS} and Corollary~\ref{c.global}}

Here we provide the proof of Theorem~\ref{t.BS}. Each direction of the proof has its own subsection, with a brief interlude to indicate the proof of Corollary \ref{c.global}.

\subsection{Proof of (i) $\Longrightarrow$ (ii).}\label{p.easy}
Suppose that $S$ is a rationally convex regular totally real set. Using Proposition \ref{p.BS} and a given regular cover $\{M_\alpha\}$ of $S$ we construct a regular cover $\{\Sigma'_{k_j}\}_{j=1,\ldots,r}$ of $S$ satisfying (i)-(v). Since
$\overline\Sigma'=\bigcup_{j}\overline\Sigma'_{k_j}$ is totally real, there exists a neighbourhood $U$ of $\overline\Sigma'$ and a nonnegative strictly plurisubharmonic $\varphi_1\in C^{\infty}(U)$ with $\varphi_1^{-1}(0)=\overline\Sigma'$.

Let $B$ be a ball large enough so that $\overline U\subset B$. For each $z\in\overline{B}\setminus U$, by Duval--Sibony~\cite[Theorem 2.1]{DS} (see Section~\ref{s.rc}), there exists a smooth positive closed $(1,1)$ form $\omega_z$ on $\mathbb{C}^n$ that is strictly positive at $z$ and zero in a neighbourhood of $S$.
Select a finite sequence of such forms $\{\omega_\ell\}$ so that if $w\in\overline{B}\setminus U$, then $\omega_\ell(w)>0$ for some $\ell$.
Set $\omega:=\sum_\ell\omega_\ell$. The form $\omega$ is a smooth positive closed $(1,1)$-form on $\mathbb{C}^n$ that is zero on a closed neighbourhood $\overline V\subset U$ of $S$, and is strictly positive elsewhere on a neighbourhood of $\overline{B}$. By shrinking $\delta>0$ in the statement of Proposition \ref{p.BS}, we may find another regular cover $\{\Sigma_{k_j}\}_{j=1\ldots,r}$ of $S$ satisfying properties (i)-(v) with $\overline \Sigma_{k_j}\subset\Sigma'_{k_j}$ for each $j$ and whose union $\overline\Sigma$ is contained in $V$ as a relatively compact subset. Let $\chi_1:\mathbb{C}^n\to [0,1]$ be a smooth function that is identically zero on $\overline{U}$ and $\chi_1=1$ outside of $B$. For $\varepsilon>0$, set
\[
\tilde\omega(z):=\omega(z)+\varepsilon\,dd^c\!\left(\chi_1(z)\cdot|z|^2\right).
\]
When $\varepsilon$ is small enough, $\tilde\omega$ is a smooth closed-$(1,1)$ form that is zero on $\overline{V}$ and strictly positive elsewhere. Let $\varphi_2\in C^{\infty}(\mathbb{C}^n)$ be a plurisubharmonic function with $dd^c\varphi_2=\tilde\omega$, and $\chi_2\in C^{\infty}_0(U)$ with $\chi_2=1$ on a neighbourhood of $\overline{V}$ in $U$. For $C>0$, set
\[
	\varphi(z):=\chi_2(z)\cdot\varphi_1(z)+C \varphi_2(z).
\]
The function $\varphi$ is strictly plurisubharmonic on $\mathbb C^n$ if $C$ is large enough, and
\[
\iota_{\Sigma_{k_j}}^*dd^c\varphi=\iota_{\Sigma_{k_j}}^*dd^c\varphi_1=d\!\left(\iota^*_{\Sigma_{k_j}}d^c\varphi_1\right)=0
\]
for each $j$, since the gradient of $\varphi_1$ vanishes on $\Sigma_{k_j}$ for each $j$. This completes the proof that (i) implies (ii).

\smallskip

Note that a similar argument can be used to show the implication (1) $\Longrightarrow$ (3) for totally real sets as discussed in Section~\ref{s.rc}.

\subsection{Proof of Corollary~\ref{c.global}}\label{p.cor}
Suppose that $S\subseteq M$ is a compact rationally convex subset of a regular totally real set $M$. Let $\{\Sigma_{k_j}\}_{j=1,\ldots,r}$ be a regular cover of $S$ satisfying properties (i)-(v) of Proposition \ref{p.BS}. Choose a neighbourhood $U$ of $S$ such that $\{U\cap\Sigma_{k_j}\}_{j=1,\ldots,r}$ is a regular cover of $S$, and $\tilde S:=\overline{U\cap\cup_{j=1}^{r}\Sigma_{k_j}}$ is a totally real set; consequently there exists a strictly plurisubharmonic $\varphi$, defined in a neighbourhood of $\tilde S$, with $\varphi^{-1}(0)=\tilde S$. We can now apply the methods of the proof above, shrinking $U$ as necessary, to extend the domain of $\varphi$ to all of $\mathbb{C}^n$.

Conversely, given the existence of a smooth strictly plurisubharmonic function $\varphi:\mathbb{C}^n\to\mathbb{R}$, a neighbourhood of $U$ of $S$, and a regular cover $\{\Sigma_{k_j}\}_{j=1,\ldots r}$ of $M$ such that $\iota^*_{U\cap\Sigma_{k_j}}dd^c\varphi=0$ for each $j=1,\ldots ,r$, an application of Theorem~\ref{t.BS} shows that, after shrinking $U$ slightly, the closure of $\hat S:=U\cap(\cup_{j=1}^r\Sigma_{k_j})$ is a rationally convex totally real compact. As $S\subset\hat S$, $S$ is rationally convex as well. Indeed, in view of Corollary~\ref{c.approx} any complex-valued continuous function on the closure of $\hat S$ can be approximated uniformly by rational functions. Therefore continuous functions on $S$ can be approxiated uniformly by rational functions, completing the proof~\cite[Theorem 1.2.12]{St}.

\subsection{Proof of (ii) $\Longrightarrow$ (i).}
The proof in this direction is more involved. Assume (ii) holds. In view of Proposition \ref{p.BS}
we may assume that the cover $\{\Sigma_{k_j}\}_{j=1,\ldots, r}$ satisfies properties (i)-(v). Further, note that
the new cover is also isotropic with respect to the given form $dd^c\varphi$, as each element of the new cover is---modulo some holes made by the surgeries---an intersection of members of the original cover. We will show that, for sufficiently small $\delta>0$, the regular cover $\{\Sigma^\delta_{k_j}\}$ has the property that
$\overline\Sigma^\delta=\bigcup_j^r\overline\Sigma^\delta_{k_j}$ is rationally convex. Since $\overline\Sigma^\delta$ shrinks down to $S$ as $\delta\searrow 0$, it will follow that $S$ is rationally convex.

	Let $\rho=\rho_\delta$ denote a strictly plurisubharmonic function in a neighbourhood $U$ of $\overline\Sigma^\delta$ with the property that $\rho^{-1}(\{0\})=\overline\Sigma^\delta$.
	\begin{lemma}[cf. Lemma 3.2 of Duval--Sibony~\cite{DS}]\label{DS.l1}
		For each sufficiently small $\delta>0$ there exists a smooth strictly plurisubharmonic function $\tilde\varphi:\mathbb{C}^n\to\mathbb R$ such that for every $m\in\mathbb N$ there exists a smooth function $h$ in a neighbourhood of $\overline{\Sigma}^\delta$ with the following properties:
	\begin{enumerate}
		\item[(a)]$|h|=\exp(\tilde\varphi+\sigma)$ with $\sigma$ vanishing on $\overline\Sigma^\delta$ and $\sigma\leq-c\cdot\rho $, where $c$ is a positive constant.
		\item[(b)]$\bar\partial h$ vanishes to order $m$ on $\overline\Sigma^\delta$.
\end{enumerate}
\end{lemma}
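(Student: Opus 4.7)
The plan is to adapt the Duval--Sibony construction from \cite[Lemma 3.2]{DS} to the stratified setting provided by Proposition~\ref{p.BS}. The Lagrangian condition $\iota^*_{\Sigma_{k_j}}dd^c\varphi=0$ guarantees that $\iota^*_{\Sigma_{k_j}}d^c\varphi$ is a closed real $1$-form on each stratum, so locally on each $\Sigma_{k_j}$ there exists a smooth real primitive $u_j$ of $d^c\varphi$, defined globally on $\Sigma_{k_j}$ modulo periods along $H_1(\Sigma_{k_j},\mathbb Z)$.

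The first task is to glue these primitives together on $\overline\Sigma^\delta$. Across strata, the $u_j$ need not agree, and in general there are periods around closed loops in $\overline\Sigma^\delta$ that pass through several strata; however, by remark (1) after Proposition~\ref{p.BS} the group $H_1(\overline\Sigma^\delta,\mathbb Z)$ is finitely generated. Following Duval--Sibony, I would replace $\varphi$ by $\tilde\varphi := N\varphi + \Re f$ for a large integer $N$ and an entire function $f$ obtained via Runge-type approximation, chosen so that all periods of $d^c\tilde\varphi$ along a generating set of $1$-cycles lie in $2\pi\mathbb Z$. For sufficiently small $\Re f$ relative to $N\varphi$, the function $\tilde\varphi$ remains smooth and strictly plurisubharmonic, and with this choice $e^{iu}$ is a single-valued smooth function on $\overline\Sigma^\delta$ even though $u$ itself is only defined modulo $2\pi\mathbb Z$.

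Next, I would invoke the Whitney extension theorem, which applies to $\overline\Sigma^\delta$ due to the Whitney (B) stratification from Proposition~\ref{p.BS}(v), to extend $u$ to a smooth complex function $U$ in a neighbourhood of $\overline\Sigma^\delta$ whose $m$-jet at every point of $\overline\Sigma^\delta$ is prescribed so that $\bar\partial(\tilde\varphi+iU)$ vanishes to order $m$ on $\overline\Sigma^\delta$ as a form on the ambient $\mathbb C^n$. The candidate function is then $h := \exp(\tilde\varphi + iU)$. Property (b) is immediate, since $\bar\partial h = h\cdot\bar\partial(\tilde\varphi+iU)$ vanishes to order $m$ on $\overline\Sigma^\delta$ by construction. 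For property (a), $|h| = \exp(\tilde\varphi - \Im U)$, so $\sigma := -\Im U$ vanishes on $\overline\Sigma^\delta$; the strict estimate $\sigma \leq -c\rho$ in a tubular neighbourhood should follow from Taylor expansion: the second-order part of $U$ transverse to $\overline\Sigma^\delta$ is forced by the almost-holomorphy condition, and strict plurisubharmonicity of $\tilde\varphi$ ensures that the resulting quadratic term in $\Im U$ dominates $c\rho$.

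The main obstacle is the handling of periods in the stratified ANR setting. In Duval--Sibony's original argument for a single smooth Lagrangian manifold, the cycles supporting periods lie within one smooth manifold, while in our case the generators of $H_1(\overline\Sigma^\delta)$ can pass through strata of different dimensions, joined along their common boundaries. The finite generation of $H_1(\overline\Sigma^\delta,\mathbb Z)$ and the ANR/neighbourhood-retract property from Proposition~\ref{p.BS}(v) are precisely what make the Runge-type argument go through, allowing a single entire function $f$ to simultaneously absorb all the periods.
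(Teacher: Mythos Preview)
Your period-correction step does not work as stated. If $f$ is entire then $d^c\Re f = d\Im f$, which is globally exact on $\mathbb C^n$; pulling back to any stratum still gives an exact form, so $\int_{\gamma_\ell} d^c\tilde\varphi = N\!\int_{\gamma_\ell} d^c\varphi$ and no choice of entire $f$ moves the periods at all. The paper's mechanism is different and essential: it builds compactly supported smooth functions $\psi_\ell$ with $\iota^*_{\overline\Sigma'_{k_j}} d^c\psi_\ell = \iota^*_{\overline\Sigma'_{k_j}}\alpha_\ell$ (Lemma~\ref{l.BS}), by prescribing the normal derivatives of $\psi_\ell$ in the $J(T\Sigma_{k_j})$-directions along each stratum. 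This is where the totally real condition, the nesting property (ii) of the cover, and the ANR structure are actually used; the resulting $\psi_\ell$ let one adjust each period independently, after which one rescales by a large integer $M$.

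There is a second gap in your treatment of~(a). The almost-holomorphic extension only forces $|h|=e^{\varphi_2}$ to \emph{first} order on $\overline\Sigma^\delta$, so $\sigma$ is $O(\rho)$ with no sign; strict plurisubharmonicity of $\tilde\varphi$ alone does not give $\sigma\le -c\rho$. The paper (following Duval--Sibony) obtains the inequality by a further modification of the potential: one sets $\tilde\varphi=\varphi_2+Af(\rho)$ for a carefully chosen cutoff $f\in C^\infty([0,\infty))$ with $f(t)=t$ near $0$, so that near $\overline\Sigma^\delta$ one has $\tilde\varphi-\log|h|\ge (A/2)\rho$, while $\tilde\varphi$ remains strictly plurisubharmonic. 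Your Taylor-expansion sketch does not produce this. Finally, your invocation of ``the Whitney extension theorem'' is too coarse: the paper extends via the H\"ormander--Wermer almost-analytic extension, and because strata may have dimension $<n$ it first thickens each local piece $M_q$ to a maximal totally real manifold $\hat M_q$ with prescribed normal derivative $d\varphi_2+id^c\varphi_2$ before applying the extension lemma.
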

	We now show that the lemma implies the theorem, following Duval--Sibony~\cite{DS}.

	Set $\overline\Sigma^\delta_\varepsilon:=\{z\in U\,:\,\rho(z)<\varepsilon\}$. Choose $\varepsilon>0$ small enough so that $\overline\Sigma^\delta_{2\varepsilon}$ is contained in $U$ as a relatively compact subset and is pseudoconvex.

	Using H\"ormander's estimates~\cite{Ho}, we solve the equation $\bar\partial u=\bar\partial h$ on $\overline\Sigma^\delta_{2\varepsilon}$ with the estimate $\|u\|^2_{L^2(\overline\Sigma^\delta_{2\varepsilon})}\leq C\|\bar\partial h\|^2_{L^2(\overline\Sigma^\delta_{2\varepsilon})}$, where $C$ can be chosen independently of $\varepsilon$.

	Choose $\eta>0$ small enough so that for each $x\in\overline\Sigma^\delta_\varepsilon$, the ball $B(x,\eta\varepsilon)$ centred at $x$ with radius $\eta\varepsilon$ is contained in $\overline\Sigma^\delta_{2\varepsilon}$.

	For a point $z\in\overline\Sigma^\delta_\varepsilon$, we may apply a lemma of H\"ormander--Wermer~\cite{HW2} to see that
	\begin{align*}
		|u(z)|&\lesssim\varepsilon\sup_{B(z,\eta\varepsilon)}|\bar\partial u|+\varepsilon^{-n}\|u\|_{L^2(B(z,\eta\varepsilon))}\\
		&\lesssim\varepsilon^{m+1}+\varepsilon^{-n}\|\bar\partial h\|_{L^2(\overline\Sigma^\delta_{2\varepsilon})}\\
		&\lesssim\varepsilon^{m+1}+\varepsilon^{-n+m}=\text{O}(\varepsilon^3),
	\end{align*}
	provided $m$ is large.

	Set $\tilde h:=e^{(c/2)\varepsilon}(h-u)$; $\tilde h$ is holomorphic on $\overline\Sigma^\delta_{\varepsilon}$. On $\overline\Sigma^\delta$ we have, for small $\varepsilon>0$,
\[
	|\tilde h|=e^{(c/2)\varepsilon}|h-u|\geq e^{(c/2)\varepsilon}(e^{\tilde\varphi}-\text{O}(\varepsilon^3))=e^{\tilde\varphi+(c/2)\varepsilon}-\text{O}(\varepsilon^3) \geq e^{\tilde\varphi}.
\]
On the other hand, on the boundary $b\overline\Sigma^\delta_\varepsilon$ of $\overline\Sigma^\delta_{\varepsilon}$ we have
\[
	|\tilde h|\leq e^{(c/2)\varepsilon}(e^{\tilde\varphi-c\varepsilon}+\text{O}(\varepsilon^3))\leq e^{\tilde\varphi-(c/2)\varepsilon}+\text{O}(\varepsilon^3)< e^{\tilde\varphi}.
\]
	We can now apply a lemma of Duval--Sibony \cite[Lemma 1.2]{DS}, with $\Omega=\mathbb{C}^n$ (see Section~\ref{s.rc}), to conclude that $\overline\Sigma^\delta$ is rationally convex for sufficiently small $\varepsilon>0$.

\begin{proof}[Proof of Lemma \ref{DS.l1}]
To begin, we use Proposition \ref{p.BS} to construct a new regular cover $\{\Sigma_{k_j}'\}_{j=1,\ldots,r}$ with
$\overline\Sigma_{k_j}'\subset\Sigma_{k_j}$ for each $j$. This cover is also isotropic with respect
to the form $dd^c \varphi$. By part (v) in Proposition \ref{p.BS}, $\overline\Sigma'$ is a retract of some open set $\tilde\Sigma'\subset\mathbb{C}^n$, and by Hatcher~\cite[Prop 1.17]{Ha} there is an injection $H_1(\overline\Sigma',\mathbb{Z})\hookrightarrow H_1(\tilde\Sigma',\mathbb{Z})$. Let $\gamma_1,\ldots,\gamma_p\in H_1(\tilde\Sigma',\mathbb{Z})$ be a basis for the image of this injection, and $\alpha_1,\ldots\alpha_p$ be a corresponding dual basis of closed $1$-forms on $\tilde\Sigma'$.
We may assume that the curves $\gamma_1,\ldots\gamma_p$ are supported on $\overline\Sigma'$; moreover, we may assume that they are piecewise smooth.
\begin{lemma}\label{l.BS}
	For each $\ell=1,\ldots,p$ there exists a smooth compactly supported function $\psi_\ell$ satisfying
\[
	\iota^*_{\overline\Sigma'_{k_j}}d^c\psi_{\ell}=\iota^*_{\overline\Sigma'_{k_j}}\alpha_{\ell},\ \
	j=1,\dots, r.
\]
\end{lemma}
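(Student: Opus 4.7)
My plan is a partition-of-unity patching, built on the following observation: if each local model $\psi_p$ is arranged to vanish identically on its ambient totally real manifold $M_p$, then the Leibniz remainder produced by the partition of unity automatically drops out when restricted to any $\overline\Sigma'_{k_j}$. That this normalization is compatible with prescribing $d^c\psi_p|_{M_p}$ relies essentially on total reality: the pointwise value of $\psi$ along $M_p$ and its derivative in the transverse direction $JTM_p$ are independent data, so one can freely impose the former to vanish while forcing the latter to match $\alpha_\ell$. Closedness of $\alpha_\ell$ plays no role in this lemma.

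First, for each $p \in \overline\Sigma'$ I will use property (iii) of Proposition~\ref{p.BS} to pick a neighbourhood $V_p \subset \mathbb C^n$ and a totally real manifold $M_p \subset V_p$ containing $\overline\Sigma' \cap V_p$. Using the splitting $T\mathbb C^n|_{M_p} = TM_p \oplus JTM_p \oplus N_p$ together with a tubular retraction (equivalently, the explicit formula $\psi_p := -\eta(z)\sum_{i=1}^k a_i(x_1,\ldots,x_k)\, y_i$ in local holomorphic coordinates adapted to $T_pM_p$, where $\alpha_\ell|_{M_p} = \sum a_i\,dx_i$ and $\eta \in C_0^\infty(V_p)$ equals $1$ near $\overline\Sigma' \cap V_p$), I construct $\psi_p \in C^\infty(\mathbb C^n)$, supported in $V_p$, satisfying
\[
\psi_p|_{M_p} \equiv 0, \qquad d^c\psi_p|_{M_p} = \alpha_\ell|_{M_p}.
\]
Because $\overline\Sigma'_{k_j} \cap V_p \subset \overline\Sigma' \cap V_p \subset M_p$, both identities descend automatically: $\psi_p|_{\overline\Sigma'_{k_j} \cap V_p} \equiv 0$ and $d^c\psi_p|_{\overline\Sigma'_{k_j} \cap V_p} = \alpha_\ell|_{\overline\Sigma'_{k_j} \cap V_p}$.

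By compactness of $\overline\Sigma'$ I then select a finite subcover $\{V_{p_\beta}\}$ and a smooth partition of unity $\{\chi_\beta\}$ subordinate to it with $\sum_\beta \chi_\beta \equiv 1$ on a neighbourhood of $\overline\Sigma'$, and set $\psi_\ell := \sum_\beta \chi_\beta\, \psi_{p_\beta}$, which is smooth and compactly supported on $\mathbb C^n$. Restricting the Leibniz expansion
\[
d^c\psi_\ell = \sum_\beta \chi_\beta\, d^c\psi_{p_\beta} + \sum_\beta \psi_{p_\beta}\, d^c\chi_\beta
\]
to any $\overline\Sigma'_{k_j}$, the second sum vanishes identically because $\psi_{p_\beta}|_{\overline\Sigma'_{k_j} \cap V_{p_\beta}} = 0$ for every $\beta$, while the first reduces to $\bigl(\sum_\beta \chi_\beta\bigr)\alpha_\ell|_{\overline\Sigma'_{k_j}} = \alpha_\ell|_{\overline\Sigma'_{k_j}}$, which is the desired identity. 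The principal technical delicacy is the construction of the local $\psi_p$ satisfying both the vanishing and the prescribed $d^c$-condition simultaneously; without this vanishing, the uncontrolled $\sum \psi_{p_\beta}\, d^c\chi_\beta$ term would spoil the patching, and one would be forced to address cohomological obstructions stratum by stratum.
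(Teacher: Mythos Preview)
Your proof is correct and rests on the same local mechanism as the paper's: because $M$ is totally real, the value of a function along $M$ and its normal derivative in the $J(TM)$-direction are independent data, so one can impose $\psi|_M\equiv 0$ while prescribing $\iota_M^*d^c\psi$ freely. The globalization, however, is organized differently. The paper builds a stratum-indexed cover $\{U_j\}_{j=1}^r$ by reverse induction, arranged so that each $U_j$ avoids all higher-dimensional strata $\overline\Sigma'_{k_i}$ with $i>j$; it then absorbs the cutoffs into the \emph{target}, solving $\iota^*_{U_j\cap\Sigma_{k_j}}d^c\psi_{\ell,j}=\iota^*_{U_j\cap\Sigma_{k_j}}(\chi_j\alpha_\ell)$ and summing $\psi_\ell=\sum_j\psi_{\ell,j}$ directly, with property~(ii) of Proposition~\ref{p.BS} guaranteeing that $\overline\Sigma'_{k_j}\cap U_i\subset\Sigma_{k_i}$ whenever nonempty. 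You instead take an arbitrary finite cover by local totally real manifolds $M_p\supset\overline\Sigma'\cap V_p$ (available via property~(iii) and Harvey--Wells), solve $\iota_{M_p}^*d^c\psi_p=\iota_{M_p}^*\alpha_\ell$ with $\psi_p|_{M_p}=0$, and let that vanishing kill the Leibniz remainder $\sum_\beta\psi_{p_\beta}\,d^c\chi_\beta$ upon restriction to any $\overline\Sigma'_{k_j}$. Your route is a bit more streamlined, sidestepping the inductive construction of the ordered cover; the paper's route stays entirely within the given stratification data and makes the role of the nesting property~(ii) more visible. One small imprecision: your explicit formula $\psi_p=-\eta\sum a_i y_i$ presumes $M_p$ is flat in the chosen coordinates, and in any case yields $\iota_{M_p}^*d^c\psi_p=\eta\cdot\iota_{M_p}^*\alpha_\ell$ rather than $\iota_{M_p}^*\alpha_\ell$ on the nose---but since $\eta\equiv 1$ near $\overline\Sigma'\cap V_p$ and the cutoffs $\chi_\beta$ are supported there, this does not affect the conclusion.
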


\begin{proof}
	Fix $\ell\in\{1,\ldots,p\}$. We will first construct an open cover $\{U_j\}_{j=1}^{r}$ of $\overline\Sigma'$ with special properties via reverse induction on $j$. Let $U_r$ be a neighbourhood of $\overline\Sigma_{k_r}'$ such that $\overline{U_r\cap\Sigma_{k_r}}\subset\Sigma_{k_r}$. Suppose that $U_r,\ldots,U_{j+1}$ have been constructed. Let $U_{j}$ be a neighbourhood of $\overline\Sigma'_{k_j}\setminus\bigcup_{i> j} U_i$ such that $\overline{U_j\cap\Sigma_{k_j}}\subset\Sigma_{k_j}$ and $U_j\cap\overline\Sigma'_{k_i}=\varnothing$ for any $i>j$. In this way, we construct an open cover $\{U_j\}$ of $\overline\Sigma'$ with the property that for any $j$, $U_j\cap\overline\Sigma'_{k_i}=\varnothing$ whenever $i>j$.

	Now, let $\{\chi_j\}$ be partition of unity subordinate to $\{U_j\}$, and choose $\psi_{\ell,j}\in C^{\infty}_0(U_j)$ such that
\[
	\iota^*_{U_j\cap\Sigma_{k_j}}d^c\psi_\ell=\iota^*_{U_j\cap\Sigma_{k_j}}(\chi_j\alpha_\ell).
\]
	This can be achieved if we assume $\psi_{\ell,j}$ is zero on $U_j\cap\Sigma_{k_j}$ and specify derivatives of $\psi_{\ell,j}$ in the directions contained in $J(T\Sigma_{k_j})$. Set $\psi_\ell:=\sum_{i=1}^r\psi_{\ell,i}$. Then
\[
	\iota^*_{\overline\Sigma'_{k_j}}d^c\psi_\ell=\sum_{i=1}^r\iota^*_{\overline\Sigma'_{k_j}}d^c\psi_{\ell,i}.
\]
	If $\overline\Sigma'_{k_j}\cap U_i=\varnothing$, then clearly $\iota^*_{\overline\Sigma'_{k_j}}d^c\psi_{\ell,i}=0$. If $\overline\Sigma'_{k_j}\cap U_i\neq\varnothing$, then $j\leq i$ by construction, and so $\overline\Sigma'_{k_j}\cap U_i$ is a submanifold of $\Sigma_{k_i}$. In either case,
\[
	\iota^*_{\overline\Sigma'_{k_j}}d^c\psi_{\ell,i}=\iota^*_{\overline\Sigma'_{k_j}}(\chi_i\alpha_{\ell}),
\]
	so we conclude that
\[
	\iota^*_{\overline\Sigma'_{k_j}}d^c\psi_\ell=\sum_{i=1}^r\iota^*_{\overline\Sigma'_{k_j}}(\chi_i\alpha_\ell)=\iota^*_{\overline\Sigma'_{k_j}}\alpha_\ell
\]
	for each $j$.
\end{proof}
Applying the lemma, note that
\[
	\int_{\gamma_s}d^c\psi_{\ell}=\int_{\gamma_s}\alpha_\ell=\delta_{s\ell}
\]
for $1\leq s,\ell\leq p$.

	Set $\varphi_{\lambda}:=\varphi+\lambda_1\psi_1+\ldots+\lambda_p\psi_p$, where $\lambda=(\lambda_1,\ldots,\lambda_p)$ is chosen small enough so that $\varphi_{\lambda}$ is strictly plurisubharmonic on $\mathbb C^n$, and
\[
	\int_{\gamma_\ell}d^c\varphi_\lambda\in 2\pi\mathbb{Z}/ M\hspace{2 em}\text{for each }1\leq\ell\leq p
\]
	and some large integer $M$. Here the assumption that $\iota^*_{\Sigma_{k_j}}dd^c\varphi=0$ for each $j$ has been used.

	Set $\varphi_2:=M\varphi_{\lambda}$ and fix $x_0\in\overline\Sigma'$. It is straightforward to see that the function $g:\overline\Sigma'\to\mathbb{R}/2\pi\mathbb{Z}$, given by
\[
	g(x)=\int_{x_0}^xd^c\varphi_2,
\]
	is well-defined. Indeed, the integration is being taken over some piecewise smooth curve in $\overline\Sigma'$ connecting $x_0$ to $x$, and it is independent of the choice of curve. Define now $h_1:\overline\Sigma'\to\mathbb{C}$ by setting $h_1=e^{\varphi_2}e^{ig}$.

	Shrinking further $\delta>0$ in the statement of Proposition \ref{p.BS} yields another regular cover $\{\Sigma''_{k_j}\}_{j=1}^{r}$ of $S$ with $\overline\Sigma''_{k_j}\subset\Sigma'_{k_j}$.

	We next claim that $h_1$ may be extended to a smooth function $h_2$, defined on a neighbourhood of $\overline\Sigma''=\cup_{j}\overline\Sigma''_{k_j}$ so that $\bar\partial h_2|_{\overline\Sigma''}=0$ and $|h_2|$ agrees with $e^{\varphi_2}$ to order 1 on $\overline\Sigma''$. We will construct the extension locally and patch it together using a partition of unity.

	For $q\in\overline\Sigma''$, let $j(q)=\max\{j\,:\,q\in\Sigma'_{k_j}\}$. Then there exists a neighbourhood $U_q$ so that $\overline{U_q\cap\Sigma'_{k_{j(q)}}}\subset\Sigma'_{k_{j(q)}}$. Write $M_q:=U_q\cap\Sigma'_{k_{j(q)}}$; observe that $M_q$ is a smooth submanifold of $\Sigma_{k_{j(q)}}'$.

		First assume that $\dim M_q=n$. We apply  H\"ormander--Wermer~\cite[Lemma 4.3]{HW2} to extend the function $(\varphi_2+ig)|_{M_q}$ smoothly to a function $\Phi_q$ defined on an open neighbourhood of $M_q$ with the property that $\bar\partial\Phi_r=0$ on $M_q$. (Strictly speaking, $g$ is a multiple-valued function, so we must first choose a local branch of $g$.) We may assume that $\Phi_q$ is defined on $U_q$, by shrinking the neighbourhood if necessary.

	Now assume that $\dim M_q<n$. By shrinking $U_q$ if necessary, we may assume there exists a totally real manifold $\hat M_q$ of maximal dimension $n$ containing $M_q$, and let $N_{q,x}$ be the orthogonal complement of $T_x(M_q)$ in $T_x(\hat M_q)$. Extend the function $(\varphi_2+ig)|_{M_q}$ to a function $\Phi_q$ on $\hat M_q$ with the condition that $d\Phi_q|_{N_q,x}=\alpha|_{N_q,x}$, where $\alpha$ is given by
\[
	\alpha=d\varphi_2+id^c\varphi_2.
\]
	Using the same lemma of H\"ormander--Wermer from the previous paragraph, we may extend $\Phi_q$ further to an open neighbourhood of $\hat M_q$ with $\bar\partial\Phi_q=0$ on $\hat M_q$. For simplicity of notation this extension will also be called $\Phi_q$.

	Let $\{\chi_j\}$ be a partition of unity associated with this covering, along with extensions $\Phi_j$, smooth totally real manifolds $M_j$, and real vector bundles $N_{j,x}$. Set $h_2:=\sum_j\chi_je^{\Phi_j}$. Observe that this function is indeed an extension of $h_1|_{\overline\Sigma''}$ to an open neighbourhood of $\overline\Sigma''$. We have
\[
		\bar\partial h_2=\sum_j\bar\partial\chi_je^{\Phi_j}+\sum_j\chi_je^{\Phi_j}\bar\partial\Phi_j=\sum_j\bar\partial\chi_j(e^{\Phi_j}-\tilde h_1)+\sum_j\chi_je^{\Phi_j}\bar\partial\Phi_j,
\]
where $\tilde h_1$ is any extension of $h_1$ from $\overline\Sigma''$ to $\mathbb{C}^n$. Since the $e^{\Phi_j}$ agree and equal $\tilde h_1$ on $\overline\Sigma''$, we see that $\bar\partial h_2=0$ on $\overline\Sigma''$.

We will now show that $|h_2|=e^{\varphi_2}$ to order 1 on $\overline\Sigma''$. First, we similarly have
	\begin{equation}\label{e.order1}
	dh_2=\sum_je^{\Phi_j}d\chi_j+\sum_j\chi_je^{\Phi_j}d\Phi_j=\sum_jd\chi_j(e^{\Phi_j}-\tilde h_1)+\sum_j\chi_j e^{\Phi_j}d\Phi_j.
	\end{equation}
As before, the first sum on the right side of \eqref{e.order1} above vanishes on $\overline\Sigma''$. Fix $x\in\overline\Sigma''$ and $v\in T_x\mathbb{C}^{n}\cong T_x\mathbb{R}^{2n}$. Note that
\[
T_x\mathbb{C}^{n}=T_xM_j\oplus J(T_xM_j)\oplus N_{j,x}\oplus J(N_{j,x})
\]
for every $j$ with $x\in M_j$. So if $v\in T_x M_j$, then
\[
d\Phi_j(v)=dh(v)=d(\varphi_2+ig)(v)=d\varphi_2(v)+id^c\varphi_2(v)=\alpha(v).
\]
If $v\in J(T_xM_j)$, then applying the above expression yields
\[
id\Phi_j(v)=d\Phi_j(J(v))=\alpha(J(v))=i\alpha(v).
\]
If $v\in N_{j,x}$, then by construction,
\[
d\Phi_j(v)=\alpha(v),
\]
and similarly,
\[
id\Phi_j(v)=d\Phi_j(J(v))=\alpha(J(v))=i\alpha(v)
\]
whenever $v\in J(N_{j,x})$. By linearity, we see that $d\Phi_j=\alpha$ at the point $x$. Consequently, we conclude through linearity that $d\Phi_j=\alpha$ on $\overline\Sigma''\!$, as $x$ was chosen arbitrarily.

	Applying this to \eqref{e.order1} shows that on $\overline\Sigma''$ we have
	\begin{equation}\label{e.log1}
	dh_2=h_2\alpha=h_2(d\varphi_2+id^c\varphi_2) .
	\end{equation}

	Define a holomorphic branch of the logarithm, $L$, near $h_2(x)$. Since $\bar\partial h_2=0$ on $\overline\Sigma''\!$,
	\begin{equation}\label{e.log2}
	d(L(h_2))=d(\log |h_2|+i\arg(h_2))=d(\log |h_2|)+id(\arg(h_2));
	\end{equation}
on the other hand, applying \eqref{e.log1} shows
	\begin{equation}\label{e.log3}
	d(L(h_2))=\partial(L(h_2))+\bar\partial(L(h_2))=\partial(L(h_2))=\frac{\partial h_2}{h_2}=\frac{dh_2}{h_2}=d\varphi_2+id^c\varphi_2.
	\end{equation}
Comparing the real parts of \eqref{e.log2} and \eqref{e.log3} yields
\[
d(\log |h_1|)=d\varphi_2 ,
\]
and hence $|h_1|=e^{\varphi_2}$ to order 1 at points of $\overline\Sigma''$.

Shrinking $\delta>0$ even further gives a regular cover $\{\Sigma'''_{k_j}\}_{j=1,\ldots, r}$ of $S$ with the property that $\overline\Sigma'''_{k_j}\subset\Sigma''_{k_j}$ for each $j$.
\begin{lemma}\label{l2.BS}
	The function $h_2$ can be further modified to a function $h$ on a neighbourhood of $\overline\Sigma'''\!\!$ with the additional property that $\bar\partial h =0$ to order $m$ on $\overline\Sigma'''\!:=\cup_j\overline\Sigma'''_{k_j}$.
\end{lemma}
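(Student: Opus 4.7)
The plan is to improve $h_2$ to $h$ by an iterative correction scheme, raising the order of vanishing of $\bar\partial$ on $\overline\Sigma'''$ by one at each step and stopping once it reaches $m$. Starting from $H_1 := h_2$, which by construction satisfies $\bar\partial h_2 \equiv 0$ on $\overline\Sigma''$, I would inductively produce smooth functions $H_k$ in a neighbourhood of $\overline\Sigma'''$ such that $\bar\partial H_k$ vanishes to order $k$ on $\overline\Sigma'''$, with each correction $H_k - H_{k-1}$ vanishing to order $k$ on $\overline\Sigma'''$. Setting $h := H_m$ then yields the desired function; condition (a) of Lemma~\ref{DS.l1} is inherited from $h_2$ because all corrections vanish on $\overline\Sigma'''$, so $|h|$ and $e^{\tilde\varphi}$ remain in first-order agreement along $\overline\Sigma'''$.

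The inductive step is local-to-global. Fix $k \ge 2$ and suppose $H_{k-1}$ has been constructed. For each $q \in \overline\Sigma'''$, let $j(q) := \max\{j : q \in \overline\Sigma'''_{k_j}\}$. By property (ii) of Proposition~\ref{p.BS} applied to the regular cover $\{\Sigma''_{k_j}\}$, there is a neighbourhood $U_q \subset \mathbb{C}^n$ of $q$ such that $\overline\Sigma''' \cap U_q \subset \Sigma''_{k_{j(q)}}$. If $k_{j(q)} = n$, set $\hat M_q := \Sigma''_{k_{j(q)}}$; otherwise, as in the construction of $h_2$, let $\hat M_q$ be a totally real manifold of dimension $n$ locally containing $\Sigma''_{k_{j(q)}}$. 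Applying the H\"ormander--Wermer almost-analytic extension lemma on $\hat M_q$ (equivalently, solving $\bar\partial w = \bar\partial H_{k-1}$ to order $k$ in a neighbourhood of $\hat M_q$), I would produce $w_{k,q} \in C^\infty(U_q)$ vanishing to order $k$ on $\hat M_q$ such that $\bar\partial(H_{k-1} - w_{k,q})$ vanishes to order $k$ on $\hat M_q \cap U_q$, and hence on $\overline\Sigma''' \cap U_q$.

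Choose a finite subcover $\{U_{q_\ell}\}_{\ell=1}^N$ of $\overline\Sigma'''$ and a subordinate partition of unity $\{\chi_\ell\}$, and set $w_k := \sum_\ell \chi_\ell\, w_{k,q_\ell}$ and $H_k := H_{k-1} - w_k$. Expanding
\[
\bar\partial H_k = -\sum_\ell \bar\partial \chi_\ell \cdot w_{k,q_\ell} + \sum_\ell \chi_\ell\, \bar\partial(H_{k-1} - w_{k,q_\ell}),
\]
the second sum vanishes to order $k$ on $\overline\Sigma'''$ because each summand does. For the first sum, the key point is that each $w_{k,q_\ell}$ vanishes to order $k$ on $\hat M_{q_\ell}$, while property (ii) of Proposition~\ref{p.BS} guarantees that every point of $\overline\Sigma''' \cap U_{q_\ell}$ lies in $\Sigma''_{k_{j(q_\ell)}} \subset \hat M_{q_\ell}$; hence $w_{k,q_\ell}$ also vanishes to order $k$ on $\overline\Sigma''' \cap U_{q_\ell}$, and the first sum vanishes to order $k$ on $\overline\Sigma'''$ as required.

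The main obstacle is compatibility of the local corrections across strata of different dimensions: naively, an extension built from a lower-dimensional stratum need not be consistent with one built from a higher-dimensional stratum on the overlap. The resolution is the maximality of $j(q)$ combined with property (ii) of Proposition~\ref{p.BS}, which ensures that near any $q \in \overline\Sigma'''$ every stratum meeting $q$ is contained in the single maximal totally real manifold $\hat M_q$; an extension from $\hat M_q$ then restricts correctly onto every stratum it meets. Once this compatibility is in place, the remaining ingredients---local H\"ormander--Wermer extension, partition-of-unity patching, and iteration until order $m$ is reached---are standard.
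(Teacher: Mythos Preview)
Your identification of the main obstacle (compatibility across strata via the maximality of $j(q)$ and property~(ii)) and your patching computation are both correct. The gap is in the local step. You ask for $w_{k,q}$ vanishing to order $k$ on the $n$-dimensional manifold $\hat M_q$ with $\bar\partial(H_{k-1}-w_{k,q})$ vanishing to order $k$ on $\hat M_q$. But if $w_{k,q}$ vanishes to order $k$ on $\hat M_q$ then $\bar\partial w_{k,q}$ vanishes to order $k-1$ there, so this forces $\bar\partial H_{k-1}$ to already vanish to order $k-1$ on \emph{all} of $\hat M_q$. Your induction hypothesis only provides this on $\overline\Sigma'''\cap U_q$, which is in general a proper subset (even of $\Sigma''_{k_{j(q)}}\cap U_q$, let alone of $\hat M_q$). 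The failure is visible already at $k=2$: we know $\bar\partial h_2=0$ on $\overline\Sigma''$, but the partition-of-unity cross-terms in the construction of $h_2$ give no reason for $\bar\partial h_2$ to vanish on the auxiliary $n$-dimensional extension $\hat M_q$ away from $\Sigma''_{k_{j(q)}}$. Iterating then compounds the problem: after one global patching, $\bar\partial H_2$ need not vanish to any order on the local manifolds $M'_q$, only on $\overline\Sigma'''$, and the next H\"ormander--Wermer correction has nowhere to stand.

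The paper sidesteps this by not iterating globally. Near each $q$ it runs the full H\"ormander--Wermer construction \emph{once}, locally on $M'_q=\Sigma''_{k_{j'(q)}}\cap U_q$, starting from $h_2$ itself; this uses only the available input $\bar\partial h_2=0$ on $M'_q\subset\overline\Sigma''$ and yields $\hat h_q$ with $\bar\partial\hat h_q$ vanishing to order $m$ along $M'_q$ together with the quantitative control $\hat h_q-h_2=\mathrm O\bigl(\mathrm{dist}(\,\cdot\,,M'_q)^m\bigr)$. A single global patching $h=\sum_j\chi_j\hat h_j$ then suffices, because the cover is arranged so that $U_j\cap\overline\Sigma'''\subset M'_j$, whence $\mathrm O\bigl(\mathrm{dist}(\,\cdot\,,M'_j)^m\bigr)$ controls $\mathrm O\bigl(\mathrm{dist}(\,\cdot\,,\overline\Sigma''')^m\bigr)$ on $\operatorname{supp}\chi_j$. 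The moral: do all the order-raising locally first, then patch once; doing it the other way around loses the manifold on which the next correction must live.
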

\begin{proof}
	As before, for each $q\in\overline\Sigma'''\!$ we may write $j'(q)=\max\{j\,:\,q\in\Sigma''_{k_j}\}$; there exists a neighbourhood $U_q$ of $q$ so that $\overline{U_q\cap\Sigma''_{k_{j'(q)}}}\subset\Sigma''_{k_{j'(q)}}$ and $U_q\cap\overline\Sigma_{k_i}=\varnothing$ for $i>j'(q)$. Set $M'_q:=U_q\cap\Sigma''_{k_{j'(q)}}$. Following the proof of H\"ormander--Wermer~\cite[Lemma 4.3]{HW2}, we find that, after possibly shrinking $U_q$, we can construct a local extension $\hat h_q$ of $h_2|_{\overline\Sigma'''}$ on $U_q$ with $\bar\partial\hat h_q$ vanishing to order $m$ on $\overline\Sigma'''$ and such that
\[
\hat h_q-h_2=\text{O}\big(\text{dist}(\,\cdot\,,M_q)^m).
\]
	Now let $\{U_j\}$ be an associated finite covering of $\overline\Sigma'''$ with associated extensions $\hat h_j$, and let $\{\chi_j\}$ be a partition of unity subordinate to the cover. Set $h:=\sum_j\chi_j\hat h_j$. Then $h$ is equal to $h_2$ on $\overline\Sigma'''$. For a fixed $x\in\overline\Sigma'''$, we have
	\begin{align*}
		\bar\partial h&=\sum_j\chi_j\bar\partial \hat h_j+\sum_j\hat h_j\bar\partial\chi_j=\sum_j\chi_j\bar\partial \hat h_j+\sum_j(\hat h_j-h_2)\bar\partial\chi_j\\
		&=\sum_j\chi_j\cdot\text{O}\big(\text{dist}(\,\cdot\,,M_j)^m\big)+\sum_j\text{O}\big(\text{dist}(\,\cdot\,,M_j)^m\big)\cdot\bar\partial\chi_j\\
		&\leq\text{O}\big(\text{dist}(\,\cdot\,,\overline\Sigma''')^m\big)
	\end{align*}
near $x$, because the open cover was constructed so that $U_j\cap\overline\Sigma'''\!\!\subset M_j$ for each $j$. We conclude $\bar\partial h$ vanishes to order $m$ at points of $\overline\Sigma'''\!$.
\end{proof}

To show part (a) of Lemma \ref{DS.l1}, we repeat the proof of Lemma 3.3 in Duval--Sibony~\cite{DS}. Because $\overline\Sigma'''\!$ is totally real, there exists a nonnegative strictly plurisubharmonic function $\rho$ in a neighbourhood of $\overline\Sigma'''\!$ with $V\cap\{x\,:\,\rho(x)=0\}=\overline\Sigma'''\!$. Notice that for $\varepsilon>0$ and $\tau>0$ there exists a $f\in C^{\infty}([0,\infty))$ supported on $[0,\tau]$ such that $f(t)=t$ for $t$ small and $f'(t)\geq-\varepsilon$, $tf''(t)\geq-\varepsilon$ for every $t$. (This is the same function used in the proof of Lemma 3.3 in Duval--Sibony~\cite{DS}.) Set $\theta:=\varphi_2-\log|h_2|$; note that $\theta$ is strictly plurisubharmonic on $\overline\Sigma'''$.

Choose $A>0$ such that
\begin{equation}\label{e.ineq}
\theta\geq-(A/2)\rho
\end{equation}
in some neighbourhood of $\overline\Sigma'''\!$. We also choose $\tau>0$ small enough so that $\rho$ is strictly plurisubharmonic on $\{x\,:\,\rho(x)\leq\tau\}$; therefore we have a neighbourhood $V$ of $\overline\Sigma'''\!\!$ on which $\theta$ and $\rho$ are strictly plurisubharmonic and on which \eqref{e.ineq} holds. Fix $\varepsilon>0$ such that on $V$ we have
\[
3\varepsilon A\rho^{-1}d\rho\wedge d^c\rho\leq dd^c\theta\hspace{2em}\text{and}\hspace{2em}3\varepsilon A dd^c\rho\leq dd^c\theta .
\]
Indeed, this is possible as $\theta$ is strictly plurisubharmonic on $V$ and all forms which are being compared are (1,1) forms; also, note that multiplication by $\rho^{-1}$ does not introduce singularities to the form $d\rho\wedge d^c\rho$ since $\rho$ has no first order terms in its Taylor expansion at points of $\overline\Sigma'''$. Now,
\[
dd^c\big(\theta+Af(\rho)\big)=Af''(\rho)d\rho\wedge d^c\rho+Af'(\rho)dd^c\rho+dd^c\theta\geq\frac13dd^c\theta>0
\]
on $V$. Moreover, near $\overline\Sigma'''\!\!$ we have
\[
\theta+Af(\rho)\geq-\frac{A}{2}\rho+A\rho\geq\frac{A}{2}\rho.
\]
Setting $\tilde\varphi:=\varphi_2+Af(\rho)$ completes the proof of the lemma and therefore the proof that (ii) implies (i).
\end{proof}



\begin{thebibliography}{12}


\bibitem{Be} Berndtsson, B. {\it A remark on approximation on totally real sets.} Complex analysis and digital geometry, Acta Univ. Upsaliensis Skr. Uppsala Univ. C Organ. Hist., 86, Uppsala Universitet Uppsala (2009), 75-80.

\bibitem{ChCh} Chaumat, J.; Chollet, A.-M. {\it Ensembles pics pour $A^\infty(D)$ non globalement inclus dans une vari\'et\'e int\'egrale.} Math. Ann. {\bf 258} (1981/82), no. 3, 243-252.

\bibitem{DG1} Duval, J.; Gayet, D. {\it Rational convexity of non-generic immersed Lagrangian submanifolds.} Math. Ann. {\bf 345} (2009), no. 1, 25-29.

\bibitem{DS} Duval, J.; Sibony, N. {\it Polynomial convexity, rational convexity, and currents.} Duke Math. J. {\bf 79} (1995), no. 2, 487-513.

\bibitem{FP} Fritsch, R.; Piccinini, R.A. Cellular structures in topology. Cambridge Studies in Advanced Mathematics, 19. Cambridge University Press, 1990.

\bibitem{Ga} Gayet, D. {\it Convexit\'{e} rationnelle des sous-vari\'{e}t\'{e}s immerg\'{e}es lagrangiennes.} Ann. Sci. \'{E}cole Norm. Sup. (4), {\bf 33} (2000), no. 2, 291-300.

\bibitem{GG} Golubitsky, M.; Guillemin, V. Stable mappings and their singularities. Graduate Texts in Mathematics, Vol. 14. Springer-Verlag, New York-Heidelberg, 1973. x+209 pp.

\bibitem{Go} Goresky, M. {\it Triangulation of stratified objects.}
Proc. Amer. Math. Soc. {\bf 72} (1978), no. 1, 193-200.

\bibitem{Ha} Hatcher, A. Algebraic topology. Cambridge University Press, Cambridge, 2002. xii+544 pp.

\bibitem{Ho} H\"ormander, L., An introduction to complex analysis in several variables. 3rd ed., North-Holland Mathemathical Library, 7. North-Holland Publishing Co., Amsterdam 1990. xii+254 pp.

\bibitem{HW0} Harvey, R.; Wells, R. O., Jr. {\it Holomorphic approximation and hyperfunction theory on a $C^1$-totally real submanifold of a complex manifold.} Math. Ann. {\bf 197} (1972), 287-318.

\bibitem{HW1} Harvey, R.; Wells, R. O., Jr. {\it Zero sets of non-negative strictly plurisubharmonic functions.} Math. Ann. {\bf 201} (1973), 165 -170.

\bibitem{HW2} H\"{o}rmander, L.; Wermer, J. {\it Uniform approximation on compact sets in {$C^{n}$}} Math. Scand. {\bf 23} (1968), 5 -21.

\bibitem{Mi} Mitrea, O. {\it A characterization of rationally convex immersions.} J. Geom. Anal. {\bf 30} (2020), no. 1, 968-986.

\bibitem{Na} Narasimhan, R. Analysis on real and complex manifolds. North-Holland Mathematical Library, 35. Reprint of the 1973 edition. North-Holland Publishing Co., Amsterdam, 2007. xiv+246 pp.

\bibitem{N} Nemirovski\u{\i}, S. {\it Finite unions of balls in $\mathbb C^n$ are rationally convex.}
Russian Math. Surveys 63 (2008), no. 2, 381-382.

\bibitem{SS1} Shafikov, R.; Sukhov, A. {\it Rational approximation and Lagrangian inclusions.} Enseign. Math. {\bf 62} (2016), no. 3-4, 487 -499.

\bibitem{Stol} Stolzenberg, G. {\it Polynomially and rationally convex sets.} Acta Math. {\bf 109} (1963), 259-289.

\bibitem{St} Stout, E.L., Polynomial convexity. Progress in Mathematics, 261. Birkh\"auser Boston, Inc., Boston, MA, 2007. xii+439 pp.

\bibitem{Tro} Trotman, D. {\it Stratification theory.} Handbook of geometry and topology of singularities. I, 243–273, Springer, Cham, 2020.

\bibitem{We} Wells, R. O., Jr. {\it Real-analytic subvarieties and holomorphic approximation.} Math. Ann. {\bf 179} (1969), 130-141.

\end{thebibliography}
\end{document}